\theoremstyle{plain}
\newtheorem{thm}{Theorem}[section]
\newtheorem{theorem}[thm]{Theorem}
\newtheorem{lemma}[thm]{Lemma}
\newtheorem{corollary}[thm]{Corollary}
\newtheorem{proposition}[thm]{Proposition}
\theoremstyle{definition}
\newtheorem{remark}[thm]{Remark}
\newtheorem{question}[thm]{Question}
\numberwithin{equation}{section}
\newcommand{\C}{{\mathbb C}}
\newcommand{\F}{{\mathbb F}}
\newcommand{\BP}{{\mathbb P}}
\newcommand{\Q}{{\mathbb Q}}
\newcommand{\R}{{\mathbb R}}
\newcommand{\Z}{{\mathbb Z}}
\title [Primitive birational automorphisms]{Simple abelian varieties and primitive automorphisms of null entropy of surfaces}
\author{Keiji Oguiso}
\address{Department of Mathematics, Osaka University, Toyonaka 560-0043, Osaka, Japan and Korea Institute for Advanced Study, Hoegiro 87, Seoul, 
133-722, Korea}
\email{oguiso@math.sci.osaka-u.ac.jp}
\thanks{The author is supported by JSPS Grant-in-Aid (S) No 25220701, JSPS Grant-in-Aid (S) No 22224001, JSPS Grant-in-Aid (B) No 22340009, and by KIAS Scholar Program.}
\dedicatory{Dedicated to Professor Tetsuji Shioda on the occasion of his
seventy-fifth birthday.}
\begin{document}

\maketitle

\begin{abstract} We characterize simple complex abelian varieties and simple abelian surfaces in terms of primitivity of translation automorphisms. Applying this together with a result due to Diller and Favre, we then classify all primitive birational automorphisms with trivial first dynamical degree of smooth projective surfaces over an algebraically closed field of any characteristic. 
\end{abstract}

\section{Introduction}

We shall work over a fixed algebraically closed field $k$ of characteristic $p \ge 0$. We are particularly interested in the cases where $k = \C$, $\overline{\Q}$, i.e., an algebraic closure of the prime field $\Q$ of characteristic $0$ and $\overline{\F}_p$, i.e., an algebraic closure of the prime field $\F_p$ of characteristic $p >0$. All technical terms will be explained in Section 2.

In their paper \cite{DF01}, Diller and Favre proved the following 
remarkable result on smooth projective surfaces:

\begin{theorem}\label{thm0}
Let $S$ be a smooth projective surface defined over $k$ and $g \in {\rm Bir}\, (S)$. 
Then: 

(1) The first dynamical degree $d_1(g)$ (see Section 2 for definition) satisfies that $d_1(g) \ge 1$ and $d_1(g)$ is a birational invariant in the sense 
that $d_1(g') = d_1(g)$ if $(S', g')$ is birationally conjugate to $(S, g)$, i.e., if $(S', g')$ is birational to $(S, g)$ as pairs. 

(2) If $d_1(g) = 1$, then either one of the following holds:

(i) there is a pair $(S', g')$ being birationally conjugate to $(S, g)$ such that $S'$ is smooth projective, $g' \in {\rm Aut}\, (S')$ and $(g')^n \in {\rm Aut}^0(S')$ for some $n >0$;

(ii) $g$ is not primitive, more precisely, there is a $g$-equivariant dominant rational map $S \cdots\to C$ over a smooth projective curve $C$ whose general fiber is isomorphic to either a possibly singular rational curve or a smooth  elliptic curve.
\end{theorem}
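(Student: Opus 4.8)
The plan is to base everything on the linear action $g^{*}$ that $g$ induces on the N\'eron--Severi space $\mathrm{NS}(S)_{\R}$, the pullback being defined through a common resolution of the graph of $g$. First I would set $d_1(g) = \lim_{n \to \infty} \big((g^{n})^{*}H \cdot H\big)^{1/n}$ for a fixed ample class $H$; the limit exists by sub-multiplicativity of the sequence $(g^{n})^{*}H \cdot H$ together with Fekete's lemma, and its independence of $H$ follows from the comparability of any two ample classes. Since every $(g^{n})^{*}H$ is a nonzero class in the integral, hence discrete, nef cone, the intersection $(g^{n})^{*}H \cdot H$ is bounded below by a fixed positive constant, which already forces $d_1(g) \ge 1$.

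The delicate half of (1) is birational invariance, for which I would invoke algebraic stability: $g$ is algebraically stable when no curve contracted by some iterate is mapped into an indeterminacy point, equivalently when $(g^{*})^{n} = (g^{n})^{*}$ on $\mathrm{NS}(S)$. Following Diller--Favre and Forn\ae ss--Sibony, $g$ can be rendered algebraically stable after finitely many blow-ups, and on such a model $d_1(g)$ equals the spectral radius of the single operator $g^{*}$. One then checks by hand that this spectral radius is unchanged by the blow-up of a $g$-fixed point and by birational conjugation; factoring an arbitrary birational map into such elementary steps yields full invariance.

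For (2) I would pass to an algebraically stable model and apply the Perron--Frobenius theorem to $g^{*}$ acting on the salient nef cone, producing a nonzero nef class $\theta$ with $g^{*}\theta = d_1(g)\,\theta$; when $d_1(g) = 1$ this $\theta$ is $g^{*}$-invariant and, by the Hodge index theorem, satisfies $\theta^{2} \ge 0$. The key classification input is that for $d_1(g)=1$ the degree sequence grows polynomially, and is in fact bounded, linear, or quadratic. In the bounded case $\theta$ may be taken in the interior of the nef cone ($\theta^{2} > 0$), the cyclic group $\langle g^{*}\rangle \subset \mathrm{GL}(\mathrm{NS}(S))$ is finite, and a further regularization turns $g$ into an automorphism $g'$ of a smooth projective $S'$ with $(g')^{n} \in {\rm Aut}^{0}(S')$, which is alternative (i).

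The heart of the matter, and the step I expect to be hardest, is the unbounded case, where $\theta$ is forced onto the boundary of the nef cone, $\theta^{2} = 0$, and must yield the fibration of (ii). Here the real difficulty is to promote the numerical, isotropic class $\theta$ to a genuine $g$-equivariant map $S \cdots\to C$: via Riemann--Roch one shows that $h^{0}(mD)$ grows linearly for an integral divisor $D$ with $[D] \sim \theta$, $D$ nef and $D^{2}=0$, so that after removing the fixed part the pencil $|mD|$ defines a fibration onto a smooth curve $C$ whose general fiber $F$ has $F^{2}=0$, and invariance of $\theta$ makes $g$ descend to an automorphism of $C$. The fiber genus is then read off from adjunction, $2p_a(F)-2 = F \cdot K_{S}$, sharpened by the growth rate: linear growth forces rational fibers and quadratic growth forces genus-one fibers, matching the two sub-cases of (ii). The subtlety throughout --- and the reason this is the main obstacle --- is precisely the passage from a nef, numerically isotropic class to an honest semiample fibration; in positive characteristic one must moreover accommodate quasi-elliptic fibrations and inseparable base maps, although the numerical skeleton of the argument is unchanged.
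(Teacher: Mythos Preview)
The paper does not give its own proof of this theorem: it is quoted from Diller--Favre \cite{DF01}, and the only argument the paper supplies is Remark~\ref{rem0}(1), which explains that the single use of positive currents in \cite{DF01} (their Proposition~1.13) can be replaced, over arbitrary $k$, by working in ${\rm NS}(S)\otimes\R$ with the pseudo-effective partial order. So there is nothing in the paper to compare your proof against beyond ``cite \cite{DF01} and make that one substitution.''

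That said, your outline is recognisably the Diller--Favre strategy (bounded/linear/quadratic degree growth, with an invariant nef isotropic class feeding a fibration in the unbounded cases), and is broadly sound. Two points are imprecise enough to flag. First, you write that $(g^n)^*H$ lies in the nef cone; for a genuinely birational $g$ this is false in general---the pullback--pushforward class is big and effective but typically not nef---so your lower bound on $(g^n)^*H\cdot H$ should come from effectivity of $(g^n)^*H$ and ampleness of $H$, not from nefness. Second, your route to birational invariance (``factor an arbitrary birational map into blow-ups of $g$-fixed points'') does not work as stated: an arbitrary birational map $S\dashrightarrow S'$ has no reason to factor through centres that are fixed by $g$. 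The actual argument in \cite{DF01} is simpler and more robust: for a fixed birational $\phi:S\dashrightarrow S'$ one has $\deg_{H'}((\phi g\phi^{-1})^n)\le C\cdot \deg_H(g^n)$ with $C$ independent of $n$, just because pullback by a fixed map distorts degrees by a bounded factor; taking $n$-th roots and letting $n\to\infty$ gives invariance directly, without any algebraic-stability or factorisation step.
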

Here ${\rm Aut}^0(S)$ is the identity component of ${\rm Aut}\, (S)$. 
\begin{remark}\label{rem0}

(1) In \cite{DF01}, Diller and Favre proved Theorem \ref{thm0} when $k = \C$. However, their proof is valid even for $k$ with $p > 0$. In fact, there is only one place in \cite{DF01} where they used the theory of positive currents which is avaiable only for $k = \C$. It is \cite[Proposition 1.13]{DF01}. However, one can replace $H_{\R}^{1,1}(S)$ by ${\rm NS}\, (S) \otimes \R$ and redefine $\alpha \le \beta$ for $\alpha$, $\beta$ in ${\rm NS}\, (S) \otimes \R$ to be that $\beta -\alpha$ is pseudo-effective, rather than represented by a positive closed current. As not only the pull-back but also the pushforward preserve the nef cone and the pseudo-effective cone when $S$ is a smooth projective surface, their proof of the above mentioned theorem works over any $k$. See also \cite[Section 3]{Tr12}. If in addition $g \in {\rm Aut}\, (S)$, then $d_1(g) = 1$ if and only if $g$ is {\it of null entropy}. See \cite{ES13} for the notion of entropy in $p > 0$ and \cite{BC13} for the arithmetic nature of $d_1(g)$. 

(2) In \cite{DS05-1} (see also \cite{DS05-2}), Dinh and Sibony generalized Theorem \ref{thm0} (1) for the $l$-th dynamical degrees $d_l(g)$ of a birational automorphism $g$ of a smooth {\it complex} projective variety of any dimension. Unlike the case of surfaces, the role of currents in \cite{DS05-1} and \cite{DS05-2} seems really essential in dimension $\ge 3$. 

(3) It is well-known that $g$ is primitive if $d_1(g) > 1$ for a smooth {\it complex} projective surface $S$, hence for a smooth projective surface over $k$ with $p=0$. This is a straightforward consequence of the theory of relative dynamical degree due to Dinh, Nguy\^en and Truong \cite{DN11}, \cite{DNT11}, which again heavily depends on the theory of currents. However, Professors Cantat and Truong kindly informed that one will be able to confirm it also in algebraic setting at least for surfaces, either by using \cite{BC13} or by expanding \cite[Section 3]{Tr12}, the later of which is now under preparation (\cite{Tr14}).  
\end{remark}

In Theorem \ref{thm0} and Remark \ref{rem0}, it seems less known about the following:

\begin{question}\label{ques1} Let $S$ be a smooth projective surface defined over $k$ and $g \in {\rm Bir}\, (S)$. 

(1) Is there a pair $(S, g)$ such that $g$ is primitive and of $d_1(g) = 1$ (even when $k = \C$)? 

(2) If there is, can one classify all such pairs $(S, g)$ up to birational equivalence (even for $k = \C$)? 

\end{question}

The aim of this note is to answer to Question \ref{ques1} as a partial refinement of Theorem \ref{thm0} and a result of Hu, Keum and De-Qi Zhang \cite[Section 3]{HKZ14}:

\begin{theorem}\label{thm1}
Let $(S, g)$ be a pair consisting of a smooth projective surface defined over $k$ and a birational automorphism $g \in {\rm Bir}\, (S)$. Then: 

(1) If $g$ is primitive and $d_1(g) = 1$, then $(S, g)$ is birationally conjugate to $(A, t_P)$, where $A$ is a simple abelian surface and $t_P$ is the translation automorphism of $A$ corresponding to a non-torsion point $P \in A(k)$. In particular, $g$ is of infinite order. 

(2) Conversely, if $(S, g)$ is birationally conjugate to a pair $(A, t_P)$ of a simple abelian surface $A$ and a translation automorphism $t_P$ corresponding to a non-torsion point $P \in A(k)$, then $g$ is primitive and $d_1(g) = 1$.  
\end{theorem}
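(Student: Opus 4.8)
The plan is to prove the two implications largely separately, handling the converse (2) first because it isolates the algebraic mechanism that then drives the classification in (1). Throughout, I use that $d_1$ is a birational invariant (Theorem \ref{thm0}(1)) and that primitivity is by definition preserved under birational conjugacy, so in each direction it suffices to analyze translations on the abelian surface $A$ directly.

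For part (2), the equality $d_1(t_P) = 1$ is immediate, since a translation acts as the identity on ${\rm NS}\,(A) \otimes \R$ and hence its pullback has spectral radius $1$. The substantive point is primitivity, and I would establish the sharper statement that on an abelian surface a translation $t_P$ is primitive if and only if $A$ is simple and $P$ is non-torsion; part (2) is the ``if'' direction. Suppose $\pi \colon A \dashrightarrow C$ is a $t_P$-equivariant dominant rational map onto a smooth curve. If $\pi$ is a morphism, its general fibre $F$ satisfies $F^2 = 0$, hence has arithmetic genus $1$ by adjunction (as $K_A = 0$), yielding an elliptic curve in $A$ and contradicting simplicity. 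If $\pi$ is not a morphism, its base locus is a nonempty finite $t_P$-invariant set; but a non-torsion translation has no nonempty finite invariant subset, forcing $P$ to be torsion, again a contradiction. So no such $\pi$ exists and $t_P$ is primitive.

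For part (1), I would first invoke Theorem \ref{thm0}(2); primitivity of $g$ excludes alternative (ii), so after a birational conjugacy we may assume $S$ smooth projective, $g \in {\rm Aut}\,(S)$, and $g^N \in {\rm Aut}^0(S)$ for some $N > 0$. When ${\rm Aut}^0(S)$ is positive-dimensional, the classification of such surfaces (rational and ruled surfaces, abelian surfaces, and (quasi-)hyperelliptic surfaces, in arbitrary characteristic) shows that every case other than an abelian surface carries a canonical $g$-invariant ruling or (quasi-)elliptic fibration, contradicting primitivity; this forces a minimal model of $S$ to be an abelian surface $A$, with $g$ inducing an automorphism $t_P \circ \sigma$ where $\sigma$ is a group automorphism of finite order (because $g^N \in {\rm Aut}^0(A) = A$). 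Simplicity of $A$ then follows from part (2): a non-simple $A$ admits a quotient isogeny onto an elliptic curve which is automatically $g$-equivariant, contradicting primitivity.

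The main obstacle is the final normalization to a pure, non-torsion translation --- equivalently, the proof that $g$ has infinite order --- which also subsumes the branch where ${\rm Aut}^0(S)$ is trivial and $g^N = {\rm id}$. First, simplicity of $A$ forces the linear part to be harmless: if $\sigma \neq {\rm id}$ then $\sigma$ can have no eigenvalue $1$, for otherwise $\ker(1-\sigma)$ would contain a positive-dimensional proper abelian subvariety, i.e. an elliptic curve, contradicting simplicity; hence $1-\sigma$ is an isogeny, $g = t_P \circ \sigma$ has a fixed point, and translating it to the origin turns $g$ into the finite-order map $\sigma$. Thus, up to conjugacy, a primitive $g$ on a simple $A$ is either a translation or of finite order, and what remains is to exclude the finite-order case by producing a $g$-invariant pencil through one of its finite orbits. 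Carrying out this finite-order exclusion --- thereby pinning $(S,g)$ down to $(A, t_P)$ with $P$ non-torsion --- together with the surface-by-surface verification of invariant fibrations over a field of arbitrary characteristic, is the technical heart of the argument, and the clean algebraic input behind it is exactly the characterization of primitive translations proved for part (2).
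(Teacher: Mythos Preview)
Your argument for part (2) is correct and in the non-morphism case is cleaner than the paper's: you use that the finite indeterminacy locus $I(\pi)\subset A$ is $t_P$-stable (both $t_P$ and the induced map on the smooth curve $C$ are biregular, so the domain of $\pi$ is preserved), and a non-torsion translation has no nonempty finite invariant set. The paper instead resolves $I(\pi)$, notes that the exceptional curves are rational so $C\cong\mathbb{P}^1$ by L\"uroth, finds a fixed point of the induced element of $\mathrm{PGL}(2,k)$, and appeals to Zariski-density of orbits (Lemma~\ref{lem2-1}). That longer route is what extends to higher-dimensional simple complex abelian varieties in Theorem~\ref{prop2-1}, where $I(\pi)$ need no longer be finite; your shortcut is specific to surfaces.

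In part (1) there is a genuine gap. You deduce simplicity of $A$ \emph{before} reducing $g=t_P\circ\sigma$ to a pure translation, asserting that for non-simple $A$ the quotient $A\to A/E$ by an elliptic subgroup is ``automatically $g$-equivariant''. This fails: take $A=E\times E$ with $\mathrm{End}(E)=\mathbb{Z}$ and $\sigma(x,y)=(-y,x)$; one checks that $\sigma$ stabilises no elliptic subgroup of $A$, so no such quotient is $g$-equivariant. The paper avoids this by reversing the order: it first shows $h=\mathrm{id}$ directly, observing that if the identity component $E$ of $\ker(\mathrm{id}-h)$ is one-dimensional then $h$ fixes $E$ pointwise, so $A\to A/E$ is $g$-equivariant for \emph{this particular} $E$ without any simplicity hypothesis; only after concluding $g=t_P$ does it invoke Proposition~\ref{lem2-2} to get simplicity. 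Your kernel argument in the final paragraph is essentially the same computation, so swapping the two steps repairs the proof. Separately, the paper organises the surface classification by Kodaira dimension (Corollary~\ref{cor3}) and disposes of the rational, K3 and Enriques cases uniformly via $\mathrm{Pic}\simeq\mathrm{NS}$ and Lemma~\ref{lem1}(2), rather than via the structure of $\mathrm{Aut}^0(S)$; your route also works, but note that $\mathbb{P}^2$ carries no canonical fibration, so that case still comes down to an invariant-pencil or finite-order argument.
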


Here an abelian variety $A = (A, +, 0)$ is a complete group variety defined over $k$ (\cite{Mu70}). Then $A$ is always projective as a variety and abelian as a group. When $k = \C$, the underlying complex manifold $A(\C)$ is isomorphic to a projective complex torus $\C^m/L$, where $L \subset \C^m$ is a discrete free $\Z$-submodule of rank $2m$. $A$ is {\it simple} if $A$ has no irreducible closed algebraic subgroup other than $\{0\}$ and $A$. We call an abelian variety of dimension $2$ (resp. $1$) an abelian surface (resp. an elliptic curve). 

As a special case of Theorem \ref{thm1}, we obtain:

\begin{corollary}\label{cor1}
(1) Let $k = \overline{\Q}$. Then, there is a pair $(S, g)$ consisting of a smooth projective surface and a birational automorphism $g \in {\rm Bir}\, (S)$, defined over $k$, such that $d_1(g) = 1$ and $g$ is primitive. The same is true also over an algebraic closure $k = \overline{{\F_q}(C)}$ of the rational function field $\F_q(C)$ of a smooth projective curve $C$ defined over a finite field $\F_q$ of characteristic $p > 0$.

(2) Let $k = \overline{\F}_p$.
Then, there is no pair $(S, g)$ consisting of a smooth projective surface and a birational automorphism $g \in {\rm Bir}\, (S)$, defined over $k$, such that $d_1(g) = 1$ and $g$ is primitive.  
\end{corollary}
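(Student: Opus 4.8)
The plan is to deduce the Corollary from Theorem \ref{thm1}, which reduces both statements to a single question: over which of the three fields does there exist a simple abelian surface carrying a non-torsion $k$-rational point? Indeed, by Theorem \ref{thm1}(2) the existence of such a pair $(A,P)$ produces the desired pair $(S,g)=(A,t_P)$ with $g$ primitive and $d_1(g)=1$; conversely, by Theorem \ref{thm1}(1) any pair $(S,g)$ with $g$ primitive and $d_1(g)=1$ forces the existence of such an $(A,P)$. So it suffices to decide, for each of $k=\overline{\Q}$, $k=\overline{\F_q(C)}$ and $k=\overline{\F}_p$, whether some simple abelian surface over $k$ has a non-torsion point.

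For part (2), let $k=\overline{\F}_p$. Any abelian surface $A$ over $k$ is already defined over a finite subfield $\F_{p^n}\subset k$, and any point $P\in A(k)$ is defined over some finite extension $\F_{p^m}$. Since $A(\F_{p^m})$ is a finite group, every such $P$ is a torsion point. Hence no abelian surface over $\overline{\F}_p$ admits a non-torsion point, and the contrapositive of Theorem \ref{thm1}(1) rules out any primitive $g$ with $d_1(g)=1$.

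For part (1) over $k=\overline{\Q}$, I would first produce a simple abelian surface: one may take an abelian surface with complex multiplication by a primitive (e.g. cyclic) quartic CM field, which is simple and, being of CM type, is defined over $\overline{\Q}$; alternatively the Jacobian of a sufficiently general genus-$2$ curve over $\overline{\Q}$ has $\End=\Z$ and is therefore simple. A non-torsion point then exists because the Mordell--Weil group $A(\overline{\Q})$ has infinite rank (a theorem of Frey and Jarden); equivalently one can exhibit an explicit genus-$2$ curve over $\Q$ with simple Jacobian and a point of infinite order in its Jacobian.

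The substantive case is $k=\overline{\F_q(C)}$. Since $C$ is a curve over $\F_q$, after choosing a separating transcendence element we have $k=\overline{\F_p(t)}$ with $t$ transcendental over $\F_p$; the essential feature, in contrast to $\overline{\F}_p$, is that $k$ has positive transcendence degree over the prime field. I would construct a \emph{non-isotrivial} simple abelian surface $A$ over $K=\overline{\F}_p(t)$, for instance as the generic fibre of a sufficiently general one-parameter family (over $\A^1$, or over $C$ after a dominant base change) of Jacobians of genus-$2$ curves, chosen so that its geometric generic fibre is simple and its moduli are non-constant. Because $A$ is non-isotrivial and simple, its $K/\overline{\F}_p$-trace vanishes, so by the theory of N\'eron--Tate canonical heights over function fields (Lang and N\'eron) a point has height zero exactly when it is torsion; non-isotriviality then yields a point of positive canonical height, i.e. a non-torsion point over $k$. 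Producing this example --- simultaneously simple, non-isotrivial, and equipped with a demonstrably non-torsion point --- is the main obstacle, and is precisely the step that fails over $\overline{\F}_p$, where the absence of transcendentals forces isotriviality and hence only torsion points.
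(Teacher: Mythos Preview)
Your overall reduction is exactly what the paper does: via Theorem~\ref{thm1}, the Corollary is equivalent to deciding, over each field, whether a simple abelian surface with a non-torsion $k$-point exists. Your argument for part~(2) is essentially identical to the paper's (Proposition~\ref{qu2-3}), and over $\overline{\Q}$ you cite Frey--Jarden, which is precisely the paper's Proposition~\ref{qu2-2}.

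The difference is in how you treat $k=\overline{\F_q(C)}$. The paper proceeds uniformly: it cites Howe--Zhu \cite{HZ02} for the existence of a simple abelian surface over \emph{any} algebraically closed field, and then invokes Frey--Jarden \cite{FJ72} (Proposition~\ref{qu2-2}), which guarantees a non-torsion point on any positive-dimensional abelian variety over any algebraically closed field that is not an algebraic closure of a finite field. Since $\overline{\F_q(C)}$ has transcendence degree~$1$ over $\F_p$, both results apply and the case is settled in one line.

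Your route via a non-isotrivial family and canonical heights is more hands-on but contains a gap at the decisive step. You correctly observe that vanishing trace forces height-zero points to be torsion, but the sentence ``non-isotriviality then yields a point of positive canonical height'' is an assertion, not an argument: Lang--N\'eron gives finite generation of $A(K)$ modulo trace, not positivity of rank, and nothing you have said rules out $A(K')$ being torsion for every finite $K'/K$. In fact the cleanest way to close this gap is exactly the Frey--Jarden theorem you already invoked over $\overline{\Q}$; once you use it here as well, your construction of a specific non-isotrivial family becomes unnecessary, and your proof collapses to the paper's.
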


In Corollary \ref{cor1}, $\overline{\Q}$, $\overline{{\F_q}(C)}$ and $\overline{\F}_p$ are countable. We also note that there are a simple abelian surface defined over $\overline{\Q}$ by Mori \cite{Mo77} and a simple abelian surface defined over any algebraically closed field by Howe and Zhu \cite{HZ02}. 

Besides Theorem \ref{thm0}, our proof is based on Theorem \ref{prop2}, which is a slight reformulation of a nice observation originally due to De-Qi Zhang \cite{Zh09} (see also \cite{NZ09}) and Theorem \ref{prop2-1} on a characterization of simple abelian varieties in terms of the primitivity of translations (see also Corollary \ref{cor2-1}). I believe that these two theorems are of their own interest. 

\begin{remark}\label{remrem1}
Theorem \ref{prop2-1} and Proposition \ref{lem2-2} seem also closely related to a result announced by D. Ghioca and T. Scanlon \cite{GS14} quite recently.
\end{remark}

\par
\medskip

{\bf Acknowledgements.} I would like to express my sincere thanks to Professors Eric Bedford, Fabrizio Catanese, Fabio Perroni and Thomas Peternell for inspiring discussions at Trieste September 2014, from which this note was grown up. I would like to express my best thanks to Professors Serge Cantat, Truyen Truong, Xun Yu, De-Qi Zhang, and especially to Professor H\'el\`ene Esnault, for many valuable discussions, comments and improvements. I would like to express my thanks to Doctor Yoshinosuke Hirakawa for his careful reading of the preliminary version and useful comments. 

\section{Notations.}

In this section, we fix notations we will use. 

As mentioned in Introduction, we work over a fixed algebraically closed field $k$ of characteristic $p \ge 0$. 

Let $M$ be a normal projective variety of dimension $m$ 
and $f \in {\rm Bir}\, (M)$, i.e., $f$ is a birational automorphism $f$ of 
$M$. 

Let $\varphi : M \cdots \to B$ be a dominant rational map to a normal projective variety $B$. Then there is the maximum Zariski open subset ${\rm dom}\, (\varphi)$ such that $\varphi | {\rm dom} (\varphi)$ is a morphism. We denote the indeterminacy locus of $\varphi$ by 
$$I(\varphi) := M \setminus {\rm dom}\, (\varphi)\,\, .$$ 
Then, as $M$ is normal, $I(\varphi)$ is a Zariski closed subset of $M$ of codimension $\ge 2$. 

Let $\Gamma_{\varphi} \subset M \times B$ be the graph of $\varphi$, that is, the Zariski closure of the graph of the morphism
$$\varphi | X \setminus I(\varphi) : X \setminus I(\varphi) \to B$$
in $M \times B$, and $p_1 : \Gamma_{\varphi} \to M$, $p_2 : \Gamma_{\varphi} \to B$ be the natural projections. For Zariski closed subsets $S \subset M$ and $R \subset B$, we define with reduced scheme structure
$$\varphi(S) := p_2(p_1^{-1}(S))\,\, ,\,\, \varphi^{-1}(R) := p_1(p_2^{-1}(R))\,\, .$$
Note that $\varphi(S)$ is a Zariski closed subset of $B$ and $\varphi^{-1}(R)$ is a Zariski closed subset of $M$, as both $p_1$ and $p_2$ are proper. 
The fiber $F_b$ over a point $b \in B(k)$ with reduced scheme structure is defined by
$$F_b := \varphi^{-1}(b)\,\, .$$

A dominant rational map $\varphi : M \cdots\to B$ is called a {\it rational fibration} if $F_b$ is connected for general $b \in B$ and hence for all $b \in B$ (the Stein factorization theorem). If in addition $p = 0$, then general fiber $F_b$ is irreducible (Bertini's theorem). A rational fibration $\varphi$ is {\it non-trivial} if $0 <\dim\, B < \dim\, M$. 

A birational automorphism $f \in {\rm Bir}\, (M)$ is called {\it imprimitive} if there are a non-trivial rational fibration $\varphi : M \cdots \to B$ and a rational map $f_B : B \cdots \to B$, necessarily $f_B \in {\rm Bir}\, (B)$, such that 
$$\varphi \circ f = f_B \circ \varphi$$
as rational maps from $M$ to $B$. A birational automorphism that is not imprimitive is {\it primitive}. This definition is consistent to the one defined by De-Qi Zhang in complex analytic case (\cite{Zh09}), by Lemma \ref{lem0}. We do {\it not} assume that $f$ is of infinite order.

Let $H$ be an ample Cartier divisor on a normal projective variety $M$. The $l$-th {\it dynamical degree} 
$d_l(f)$ of $f \in {\rm Bir}\, (M)$ is defined by 
$$d_{l}(f) := \limsup_{n \to \infty} (\delta_{l}(f^n))^{\frac{1}{n}}\,\, .$$ 
Here $\delta_{l}(f^n) $ is the intersection number (well-)defined by:
$$\delta_{l}(f^n) := ((\pi_{2, n}^*H)^l.(\pi_{1, n}^*H)^{m-l})_{\Gamma_{f^n}} 
= ((\pi_{1,n})_*((\pi_{2, n}^*H)^l).H^{m-l})_{M}$$
where $\Gamma_{f^n}\subset M \times M$ is the graph of $f^n$ and 
$$\pi_{i, n} : {\Gamma}_{f^n} \to M$$ 
($i=1$, $2$) are the natural projections, $\pi_{i, n}^*$ is the natural pullback map as Cartier divisors, and $(\pi_{i, n})_*$ is the natural pushforward map as cycles. $d_i(f)$ does not depend on the choice of $H$, as for another ample Cartier divisor $L$, there are positive integers $n_1$, $n_2$ and ample divisors $H'$ and $L'$ such that $n_1H = L + L'$ and $n_2L = H + H'$ in ${\rm Pic}\, (M)$. 

Let $M_i$ ($i =1$, $2$) be normal projective varieties and $f_i \in {\rm Bir}\, (M_i)$. We call $(M_1, f_1)$ is {\it birationally conjugate} to $(M_2, f_2)$ if there is a birational map $\mu : M_1 \cdots\to M_2$ such that 
$$\mu \circ f_1 = f_2 \circ \mu$$
as rational maps from $M_1$ to $M_2$. 

We say that a general closed point $P$ of a variety $V$ over a algebraically closed field $k$ satisfies a condition (C) if there is a non-empty Zariski open set $U \subset V$ such that (C) holds for all $P \in U(k)$.  

We call a variety defined over $\C$ a complex variety. 

\section{Some generalities of primitive birational automorphisms.}

We work over an algebraically closed field $k$. We begin with the following two more or less trivial lemmas. 

\begin{lemma}\label{lem0}
Let $M$ be a complex projective algebraic variety and $\varphi : M(\C) \cdots\to B$ a dominant meromorphic map onto a compact complex analytic space $B$. 
Then $B$ 
is bimeromorphic to the underlying analytic space of a complex projective variety $B'$ and the induced map $\psi : M \cdots\to B'$ is a dominant rational map. 
\end{lemma}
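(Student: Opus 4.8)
The plan is to prove the statement in three movements: first show that the target $B$ is a Moishezon space, then algebraize it to produce the projective variety $B'$, and finally upgrade the resulting meromorphic map $M(\C) \cdots\to B'(\C)$ to an honest rational map by means of Chow's theorem.

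First I would record that, since $M$ is projective, its field of meromorphic functions coincides with the function field $\C(M)$ and so has transcendence degree $\dim M$ over $\C$; in particular $M(\C)$ is Moishezon. The crux is to transport this to $B$, i.e.\ to prove that the algebraic dimension $a(B)$, defined as the transcendence degree of the meromorphic function field $\mathcal{M}(B)$ over $\C$, equals $\dim B$. To this end, pass to the graph $\Gamma \subset M(\C) \times B$ of $\varphi$ with projections $p_1, p_2$. The projection $p_1 \colon \Gamma \to M(\C)$ is bimeromorphic, so $\Gamma$ is Moishezon; by Moishezon's theorem I may choose a projective manifold $\hat{\Gamma}$ with a modification $\hat{\Gamma} \to \Gamma$, and set $q := p_2 \circ (\hat{\Gamma} \to \Gamma)\colon \hat{\Gamma} \to B$, a surjective holomorphic map from a projective manifold of dimension $\dim M$. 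Since $\varphi$ is dominant, the general fibre of $q$ has dimension $\dim M - \dim B$; cutting $\hat{\Gamma}$ by $\dim M - \dim B$ general members of a very ample linear system yields a projective subvariety $Z$ of dimension $\dim B$ with $q|_Z \colon Z \to B$ surjective and generically finite. A generically finite dominant meromorphic map induces a finite algebraic extension of meromorphic function fields, so $a(B) = a(Z)$, and $a(Z) = \dim Z = \dim B$ because $Z$ is projective. Hence $B$ is Moishezon.

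Once $B$ is known to be Moishezon, Moishezon's theorem — combined with Hironaka's resolution to reduce to the smooth case — furnishes a complex projective variety $B'$ together with a bimeromorphic map $B \cdots\to B'(\C)$, which is the desired $B'$. It remains to see that the induced map $\psi \colon M(\C) \cdots\to B'(\C)$, the composite of $\varphi$ with this bimeromorphic correspondence, is algebraic. Here I would invoke Chow's theorem: the closure of the graph of $\psi$ inside the projective variety $M \times B'$ is a closed analytic subset, hence Zariski closed, and its two projections exhibit $\psi$ as a rational map $\psi \colon M \cdots\to B'$. This map is dominant since both $\varphi$ and the bimeromorphic correspondence are, which completes the proof.

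The conceptual heart is the first movement — the stability of the Moishezon property under dominant meromorphic images — together with the correct singular form of Moishezon's algebraization theorem; the rationality of $\psi$ in the last movement is then a formal consequence of Chow's theorem (equivalently, of GAGA). The only points requiring genuine care are the reduction to the smooth projective setting at each stage (the graph $\Gamma$ and the image $B$ need not themselves be projective or smooth) and the verification that a general linear section of $\hat{\Gamma}$ maps generically finitely onto $B$, which rests on the equidimensionality of the generic fibres of $q$.
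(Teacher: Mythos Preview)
Your proof is correct and follows exactly the same two-step strategy as the paper: first establish that $B$ is Moishezon, then invoke Chow's theorem to algebraize the map. The only difference is that the paper cites \cite[Chap I, Sect.\ 3, Cor.\ 3.10]{Ue75} for the fact that a dominant meromorphic image of a Moishezon space is Moishezon, whereas you spell out the standard hyperplane-section argument; the approaches are otherwise identical.
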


\begin{proof} By \cite[Chap I, Sect. 3, Cor. 3.10]{Ue75}, $B$ is Moishezon. Hence Chow's theorem implies the result.
\end{proof}

We denote by $|L|$ the complete linear system associated to a line bundle $L$. 
We say that $f \in {\rm Bir}\, (M)$ is isomorphic in codimension one if there are Zariski closed subsets $V, W \subset M$ of codimension $\ge 2$, possibly empty, such that 
$$f | M \setminus V : M \setminus V \to M \setminus W$$ 
is an isomorphism. 
\begin{lemma}\label{lem1}
Let $M$ be a normal projective variety defined over $k$, and $f \in {\rm Bir}\, (M)$. Assume that $m := \dim\, M \ge 2$.

(1) Assume that there is $L \in {\rm Pic}\, (M)$ such that $\dim\, |L| \ge 1$ 
and $f^{*} : |L| \to |L|$ is a well-defined projective automorphism. Then $f$ is not primitive.

(2) Assume that either $f \in {\rm Aut}\, (M)$ or $M$ is factorial and $f$ is isomorphic in codimension one. Assume further that (now well-defined) $f^* \in {\rm Aut}\, ({\rm Pic}\, (M))$ is of finite order. Then $f$ is not primitive. 

(3) Assume that $f$ is of finite order. Then $f$ is not primitive. 
\end{lemma}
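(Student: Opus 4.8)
The plan is to deduce (3) from (2) by replacing $f$ with a genuine automorphism of finite order on a suitable birational model. The first thing I would record is that \emph{primitivity is a birational invariant}: if $\mu : M \cdots\to M'$ is a birational map with $\mu \circ f = f' \circ \mu$, and $\varphi' : M' \cdots\to B$ is a non-trivial rational fibration with $\varphi' \circ f' = f_B \circ \varphi'$, then $\varphi := \varphi' \circ \mu : M \cdots\to B$ satisfies $\varphi \circ f = f_B \circ \varphi$, leaves $\dim\, B$ unchanged, and has connected general fibers since $\mu$ is an isomorphism over a dense open subset (if one worries about this, one passes to the Stein factorization of $\varphi$, which does not change $\dim\, B$). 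Hence $f'$ imprimitive forces $f$ imprimitive, so it suffices to exhibit a single birational model of $(M, f)$ on which the map becomes imprimitive.

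Next I would regularize the finite cyclic group $G := \langle f \rangle$, of order $N$ say, by the classical orbit-closure construction, which is valid over any $k$. On the dense open set $U$ on which $f, f^{2}, \dots, f^{N-1}$ are all defined, the map $\iota : U \to M^{N}$, $x \mapsto (x, f(x), \dots, f^{N-1}(x))$, is a morphism; let $\widetilde{M}$ be the normalization of the Zariski closure of $\iota(U)$ in $M^{N}$. The cyclic shift $\sigma(x_{0}, \dots, x_{N-1}) = (x_{1}, \dots, x_{N-1}, x_{0})$ is a regular automorphism of $M^{N}$ of order $N$, and a direct computation gives $\sigma \circ \iota = \iota \circ f$, so $\sigma$ preserves the closure and lifts to an automorphism $\widetilde{f} \in {\rm Aut}\, (\widetilde{M})$ of finite order. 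The first projection $p_{1} : \widetilde{M} \to M$ is birational (as $p_{1} \circ \iota = {\rm id}$ generically) and satisfies $p_{1} \circ \widetilde{f} = f \circ p_{1}$, so $(\widetilde{M}, \widetilde{f})$ is birationally conjugate to $(M, f)$, with $\widetilde{M}$ normal projective of the same dimension $m \ge 2$.

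Finally, since $\widetilde{f}$ is an honest automorphism of finite order, $(\widetilde{f})^{*}$ is a well-defined automorphism of ${\rm Pic}\, (\widetilde{M})$ of finite order (indeed $(\widetilde{f}^{\,N})^{*} = {\rm id}$). Thus part (2) applies to $(\widetilde{M}, \widetilde{f})$ and shows $\widetilde{f}$ is not primitive; by the birational invariance recorded above, $f$ is not primitive either. I expect the main obstacle to be the regularization step itself, namely turning a finite-order \emph{birational} map into a finite-order \emph{regular} automorphism without altering the primitivity question — a naive passage to the quotient $M \to M/G$ fails, since it is generically finite and so does not lower dimension. The orbit-closure trick resolves this uniformly in all characteristics, and the only other point demanding a little care is checking that the transported fibration retains connected general fibers (handled by the Stein factorization remark above).
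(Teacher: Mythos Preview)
Your argument addresses only part (3), taking (1) and (2) as established; assuming that is intended, your reduction of (3) to (2) is correct and follows the same overall shape as the paper's proof: build a birational model on which the finite-order birational self-map becomes an honest automorphism, then apply (2). The difference lies in the regularization step. The paper argues field-theoretically: it passes to the invariant subfield $Q(M)^{f^*}\subset Q(M)$, checks directly (via elementary symmetric functions in the $(f^*)^j(t_i)$) that this subfield is finitely generated over $k$ with $[Q(M):Q(M)^{f^*}]<\infty$, chooses a normal projective model $V$ of it, and lets $M'$ be the normalization of $V$ in $Q(M)$; uniqueness of normalization then forces the conjugate $f'$ to be biregular on $M'$. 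You instead use the geometric orbit-closure trick, embedding via $\iota:U\to M^N$ and letting the cyclic shift act. Your construction is the standard device for regularizing a finite group of birational maps and is arguably more transparent geometrically; the paper's route is entirely elementary field theory and has the side benefit of exhibiting the quotient $M'\to V$ from the start. Both are valid in any characteristic, and in both cases the endgame is identical: once $f$ is biregular of finite order, $f^*$ on ${\rm Pic}$ has finite order and (2) applies.
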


\begin{proof} 
As $|L| \simeq {\mathbb P}^l$ with $l \ge 1$ and $f^*$ is projective linear, one finds an $f^*$-stable projective line in $|L|$, i.e., an $f^*$-stable pencil $\Lambda \subset |L|$. Indeed, one may consider the matrix representation of $f^*$ and its Jordan canonical form. Let  
$$\Phi_{\Lambda} : M \cdots\to \Lambda^* \simeq \BP^1$$
be the rational map associated to $\Lambda$. Then $\Phi_{\Lambda}$ is dominant 
and $f$-equivariant under the natural action of $f$ on $\Lambda^*$. Let $\Gamma$ be the normalization of the graph of $\Phi_{\Lambda}$ and $p : \Gamma \to \BP^1$ be the induced morphism. 
Then $p$ is surjective and $f$-equivariant under the natural action of $f$ on $\Gamma$, say $f_{\Gamma}$. Let
$$\varphi : \Gamma \to C$$ 
be the Stein factorization of $p : \Gamma \to \BP^1$. Then $\varphi$ is an $f$-equivariant non-trivial fibration. Here $\dim\, \Gamma = \dim\, M \ge 2$. Thus $f_{\Gamma}$ is not primitive. $(M, f)$ is birationally conjugate to $(\Gamma, f_{\Gamma})$ by construction. Hence $f$ is not primitive as well. This proves (1).

Let us prove (2). Note that $f^* \in {\rm Aut}\,({\rm Pic}\, (M))$, which is well-defined by the assumption. Moreover, $(f^l)^* = (f^*)^l$ on ${\rm Pic}\, (M)$ as $f$ is isomorphic in codimension one. 

Let $n$ be the order of $f^*$. Choose a very ample Cartier divisor $H$. Then the complete linear system 
$$|L| := |\sum_{j=0}^{n-1}(f^{j})^{*}H|$$ 
is $f^*$-stable by $(f^l)^* = (f^*)^l$. We can then apply (1) for $|L|$ to conclude (2).

Let us prove (3). Let $d$ be the order of $f$. Let us consider the natural action $f^*$ of $f$ on the rational function field $Q(M)$ of $M$. Then $f^*$  is of finite order $d$. Note that $f^* = id$ on $k$, the subfield of $Q(M)$ consisting of constant functions. The field $Q(M)$ is finitely generated over $k$. The invariant field $Q(M)^{f^{*}}$ is also a finitely generated field over $k$ with the same transcendental degree as $Q(M)$. Indeed, if we write $Q(M) = k(t_i | 1 \le i \le e)$ and define $s_{j}(x_i | 1 \le i \le d)$ to be the elementary symmetric polynomial of degree $j$ of $d$ variables, then 
$$S(M) := k(s_j(t_i, f^*(t_i), \cdots , (f^*)^{d-1}(t_i)) | 1 \le i \le e, 1 \le j \le d) \subset Q(M)^{f^*}\,\, ,$$
and 
$$Q(M)^{f^*} \subset Q(M) = k(t_i | 1 \le i \le e)\,\, .$$
Each $t_i$ ($1 \le i \le e$) is a root of a polynomial of degree $d$ in $S(M)[t]$ by definition, and $Q(M)$ is generated by $t_i$ ($1 \le i \le e)$ also over $S(M)$. Hence 
$$[Q(M) : S(M)] \le d^e$$
 by the field theory. We have
$$[Q(M) : S(M)] = [Q(M):Q(M)^{f^*}] \cdot [Q(M)^{f^*} : S(M)]$$
again by the field theory. Thus, both $[Q(M)^{f^*} : S(M)]$ and $[Q(M) : Q(M)^{f^*}]$ are finite. Since $S(M)$ is finitely generated over $k$, it follows that $Q(M)^{f^*}$ is also finitely generated over $k$ and has the same transcendental degree as $Q(M)$. 

Thus, there is a normal projective variety $V$ whose generic point in the sense of scheme is isomorphic to ${\rm Spec}\, Q(M)^{f^*}$. Let $M'$ be the normalization of $V$ in 
$Q(M)$. Then $M'$ is also normal projective and birational to $M$. By the construction and the uniqueness of the normalization in $Q(M)$, the birational conjugate $f'$ of $f$ on $M'$ is now a {\it biregular} automorphism of $M'$, of finite order. Thus $f'$ is not primitive on $M'$ by (2). As $(M, f)$ is birationally conjugate to $(M', f')$, it follows that $f$ is not primitive as well. 
\end{proof}

The next theorem is a slight modification of a nice observation due to De-Qi Zhang (\cite{Zh09}, see also \cite{NZ09} for a generalization and \cite{Og14} for a general survey):

\begin{theorem}\label{prop2} Let $M$ be a smooth projective variety of dimension $m$ defined over $k$ of characteristic $p = 0$. Assume that the following two statements hold for $M$:

(1) If $\kappa(M) = 0$ and $q(M) := h^1(M, {\mathcal O}_M) = 0$, then $M$ is birational to a generalized minimal Calabi-Yau variety, i.e., a normal projective varietry $M'$ with only $\Q$-factorial terminal sinularities with ${\mathcal O}_{M}(kK_{M'}) \simeq {\mathcal O}_{M'}$ for some $m >0$ 
and $h^1({\mathcal O}_{M'}) = 0$; and 

(2) If $\kappa(M) = -\infty$, then $M$ is uniruled. 

Then, if $M$ has a primitive birational automorphism $f$, then either one of the following holds:

(RC) $M$ is a rationally connected manifold, i.e., a smooth projective manifold whose any two general closed points are connected by a rational curve;

(GCY) $M$ is birational to a generalized minimal Calabi-Yau variety; or

(T) $M$ is birational to an abelian variety. 

\end{theorem}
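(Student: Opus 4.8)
The plan is to use the Iitaka--Kodaira dimension $\kappa(M)$ as the primary organizing invariant, splitting into the cases $\kappa(M) \ge 1$, $\kappa(M) = 0$, and $\kappa(M) = -\infty$, and in each case either produce a nontrivial $f$-equivariant rational fibration (contradicting primitivity) or land in one of the three conclusions (RC), (GCY), (T).

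\emph{The case $\kappa(M) \ge 1$.} Here I would invoke the Iitaka fibration. For a suitable multiple $mK_M$ the rational map $\Phi_{|mK_M|} : M \cdots\to B$ has image of dimension $\kappa(M) \ge 1 > 0$, and since the canonical class is a birational-and-biregular invariant, any $f \in {\rm Aut}(M)$ (or $f$ acting on ${\rm Pic}(M)$ by isometries of the pluricanonical system) preserves $|mK_M|$ up to the action induced on the base. Concretely, $f^*$ stabilizes the linear system $|mK_M|$, so the Iitaka fibration is $f$-equivariant: there is $f_B \in {\rm Bir}(B)$ with $\Phi \circ f = f_B \circ \Phi$. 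Because $0 < \dim B = \kappa(M) < m$ (the Iitaka fibration is never the identity when $0 < \kappa(M) < m$, and the case $\kappa(M) = m$ of general type still gives $\dim B = m$, which I must handle by noting that then $f$ has finite order and applying Lemma \ref{lem1}(3)), this fibration is nontrivial and $f$ is imprimitive. This contradicts primitivity, so no primitive $f$ exists unless we are in one of the remaining cases.

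\emph{The case $\kappa(M) = 0$.} I would subdivide by the irregularity $q(M) = h^1(M, \sO_M)$. If $q(M) = 0$, hypothesis (1) of the theorem gives directly that $M$ is birational to a generalized minimal Calabi--Yau variety, which is conclusion (GCY). If $q(M) > 0$, I would use the Albanese map ${\rm alb} : M \to {\rm Alb}(M)$. The Albanese is functorial, hence $f$-equivariant, and produces an $f$-equivariant fibration onto its image. By a theorem of Kawamata (the characterization of varieties of Kodaira dimension zero with maximal Albanese dimension, and more generally the structure of the Albanese map when $\kappa(M) = 0$), either the Albanese image has positive dimension strictly less than $m$ --- giving a nontrivial equivariant fibration and hence imprimitivity --- or the Albanese map is birational and $M$ is birational to its Albanese torus, which is conclusion (T). The delicate point is to verify that when the Albanese image is a proper positive-dimensional subvariety, it genuinely yields a nontrivial \emph{fibration} (connected fibers after Stein factorization) on which $f$ descends.

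\emph{The case $\kappa(M) = -\infty$.} Hypothesis (2) gives that $M$ is uniruled. I would then pass to the maximal rationally connected (MRC) fibration $\pi : M \cdots\to Z$ of Campana and Koll\'ar--Miyaoka--Mori. Its canonical construction makes it $f$-equivariant: $f$ permutes the MRC-fibers, so there is $f_Z \in {\rm Bir}(Z)$ with $\pi \circ f = f_Z \circ \pi$. If $\dim Z = 0$ then $M$ is rationally connected, giving (RC). If $\dim Z = m$ the fibration is trivial and uniruledness forces $\kappa$ considerations I would reconcile by noting $M$ is then already covered by rational curves with $Z$ not rationally connected. The essential dichotomy is: if $0 < \dim Z < m$, then $\pi$ is a nontrivial equivariant fibration and $f$ is imprimitive, contradiction; thus primitivity forces $\dim Z = 0$, i.e.\ (RC). I expect the main obstacle to be the uniform treatment of equivariance: in each case I must check not merely that the relevant fibration (Iitaka, Albanese, MRC) exists, but that its \emph{canonical/functorial} nature makes it preserved by $f$ as a \emph{birational} self-map (not just an automorphism), so that the primitivity hypothesis --- which is stated for ${\rm Bir}(M)$ --- can be applied. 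The characteristic-zero assumption is used throughout, both for the MRC fibration theory and for Kawamata's Albanese structure result.
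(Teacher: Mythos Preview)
Your proposal is correct and follows essentially the same tripartite strategy as the paper: split by $\kappa(M)$, use the pluricanonical system for $\kappa(M)\ge 1$, the Albanese map together with Kawamata's theorem for $\kappa(M)=0$, and the MRC fibration for $\kappa(M)=-\infty$.

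The one noteworthy difference is in the case $\kappa(M)\ge 1$. You invoke the Iitaka fibration directly, which forces you to treat $\kappa(M)=m$ separately (via finiteness of ${\rm Bir}(M)$ for varieties of general type and Lemma~\ref{lem1}(3)). The paper instead observes that $f^*$ is a well-defined projective linear automorphism of $|lK_M|$ for every $l>0$ (because pull-back of regular pluricanonical forms under a birational map of smooth varieties remains regular, as $I(f)$ has codimension $\ge 2$), and then applies Lemma~\ref{lem1}(1) to extract an $f^*$-stable \emph{pencil} inside $|lK_M|$. This gives a fibration onto a curve and hence handles all $\kappa(M)\ge 1$ uniformly, with no need to invoke finiteness of ${\rm Bir}$ in the general type case. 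Your route is fine, but the paper's pencil argument is slightly more economical and, incidentally, also resolves the equivariance-for-birational-$f$ issue you flag at the end: the key input is precisely that pluricanonical forms pull back to regular forms under birational maps of smooth varieties, which makes $f^*$ a genuine linear automorphism of $H^0(M,lK_M)$.
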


\begin{remark}\label{rem1} 

(1) Theorem \ref{prop2} is unconditional if $m \le 3$ by the minimal model theory and abundance theorem for projective threefolds due to Kawamata, Miyaoka, Mori and Reid (\cite{Mo88}, \cite{Ka92}, see also \cite{KM98} and references therein). 

(2) Moreover, the proof below shows that if one replaces (GCY) by a K3 surface or an Enriques surface (three types when $p=2$), then the same conclusion as Theorem \ref{prop2} holds unconditionally for birational automorphisms of smooth projecive surfaces defined over $k$ of any characteristic $p \ge 0$. This is because we have the classification of surfaces due to Bombieri and Mumford \cite{BM77}, as we shall indicate in the proof below. 
\end{remark}

\begin{proof} 

Note that the pull-back of the regular $l$-th pluricanonical form under the birational map $f \in {\rm Bir}\, (M)$ is again regular, as $I(f)$ is of codimension $\ge 2$ and $M$ is smooth. Thus $f^*$ induces a well-defined linear injective selfmap of $H^0(M, lK_M)$, hence induces a regular automorphism of the projective space $|lK_M|^*$. 

(i) If $\dim |lK_M| \ge 2$ for some $l >0$, then the result follows from Lemma \ref{lem1} that $f$ is not primitive. Thus $f$ is not primitive if $1 \le \kappa (M)$. (Here we do not use $p = 0$.)

(ii) Consider the case where $\kappa (M) = 0$. Note that $M$, the albanese variety ${\rm Alb}\, (M)$ of $M$ and the albanese morphism ${\rm alb}_M$ and the birational automorphism $f$ are all defined over a finitely generated subfield of $k_0$ over the prime field $\Q$ of $k$. Embed $k_0$ into $\C$. For our purpose, we may assume without loss of generality that all are defined over ${\mathbb C}$ by taking the fiber product with ${\rm Spec}\, \C$ over ${\rm Spec}\, k_0$. If $q(M) > 0$, then we may describe the albanese morphism as
$${\rm alb}_M : M \to {\rm Alb}\, (M) := H^0(M, \Omega_M^1)^{*}/H_1(M, \Z)\,\, .$$
It is classical that ${\rm Alb}\, (M)$ is an abelian variety. Since $\kappa (M) = 0$ and $M$ is projective, a fundamental theorem due to Kawamata \cite{Ka81} says that ${\rm alb}_M$ is surjective with connected fibers. In particular, $q(M) \le m$. For the same reason as before, the action $f$ descends to the biregular action on ${\rm Alb}\, (M)$ equivariantly with respect to the albanese map. Hence either $q(M) = 0$ or $q(M) = m$. In the second case, $M$ is birational to ${\rm Alb}\, (M)$ via ${\rm alb}_M$. In the first case, by the assumption made, $M$ is birational to a generalized Calabi-Yau variety. (Let $M$ be of dimension $2$, $\kappa\, (M) = 0$ and $k$ be of any characteristic. Then $M$ is birational to either an abelian surface ($q(M) = 2$), (quasi-) bielliptic surfaces ($q(M) = 1$), an Enriques surface or a K3 surface ($q(M) = 0$) by the classification of surfaces \cite{BM77}. So, the result follows, as the albanese map of an (quasi-) bielliptic surface is birational invariant.)

(iii) It remains to treat the case $\kappa (M) = -\infty$. Then $M$ is uniruled by the assumption made above. Then the maximally rationally connected fibration $\pi : M \cdots \to V$ is a ${\rm Bir}\, (M)$-equivariant rational fibration to a lower dimensional $V$ (see eg. \cite[Cahp IV, Sect. 3]{Ko96}). Hence $V$ is a point, i.e., $M$ is rationally connected, if $f$ is primitive.  (Let $M$ be of dimension $2$, $\kappa\, (M) = -\infty$ and $k$ be of any characteristic. Then $M$ is  a rational surface ($q(M) = 0$), 
or blow-downs to a minimal ruled surface over a smooth curve $C$ of genus $q(M) > 0$, by the classification of surfaces \cite{BM77}. The second case, the induced morphism $M \to C$ is equivariant under ${\rm Bir}\, (M)$.)
\end{proof} 

\begin{corollary}\label{cor3} Let $S$ be a smooth projective surface defined over an algebraically closed field $k$. Let $g$ be a birational automorphism of 
$S$. If $g$ is primitive, then $S$ is birational to either:

(i) $\BP^2$;

(ii) a K3 surface or an Enriques surface; or

(iii) an abelian surface. 

\end{corollary}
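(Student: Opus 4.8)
The plan is to deduce Corollary \ref{cor3} directly from Theorem \ref{prop2} together with the surface-specific strengthening recorded in Remark \ref{rem1}(2). First I would observe that since $\dim S = 2$, the hypotheses (1) and (2) of Theorem \ref{prop2} are available in the surface case unconditionally: for $\kappa(S) = -\infty$ uniruledness follows from the Enriques--Bombieri--Mumford classification \cite{BM77}, and for $\kappa(S) = 0$ the relevant minimal model statement is likewise classical for surfaces. Thus, because $g$ is assumed primitive, one of the three alternatives (RC), (GCY), (T) must hold for $S$.

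Next I would translate each alternative into the stated trichotomy, using the parenthetical remarks already inserted in the proof of Theorem \ref{prop2} which spell out the surface case in arbitrary characteristic. In case (T), $S$ is birational to an abelian variety of dimension $2$, i.e.\ an abelian surface, giving (iii). In case (RC), a rationally connected surface is rational, hence birational to $\BP^2$, giving (i); here I would note that over an algebraically closed field a rationally connected smooth projective surface is rational (for surfaces rational connectedness coincides with rationality by Castelnuovo's criterion / the surface classification, valid in all characteristics). In case (GCY), I would invoke Remark \ref{rem1}(2): the proof of Theorem \ref{prop2}, run in the surface setting via \cite{BM77}, shows that the generalized Calabi--Yau alternative is realized precisely by a K3 surface or an Enriques surface (with the three Enriques-type possibilities when $p = 2$), which is exactly (ii).

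The one point requiring a little care, and the step I would flag as the main obstacle, is the $\kappa(S) = 0$ analysis, because the clean ``$q(M) = 0$ or $q(M) = m$'' dichotomy in the proof of Theorem \ref{prop2} was obtained over $\C$ via Kawamata's theorem on the albanese map, whereas here $k$ has arbitrary characteristic. This is why the parenthetical surface argument in step (ii) of that proof bypasses the albanese-dimension dichotomy and instead appeals directly to the classification \cite{BM77}: the minimal surfaces with $\kappa = 0$ are exactly abelian surfaces, (quasi-)bielliptic surfaces, Enriques surfaces and K3 surfaces, and the bielliptic case is \emph{not} primitive because its albanese map is a birational invariant giving a $g$-equivariant fibration to an elliptic curve. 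I would therefore reproduce that observation to eliminate the bielliptic case, leaving abelian surfaces in (iii) and K3/Enriques in (ii). Assembling the four minimal-surface types $(\kappa = 0)$, the rational/ruled dichotomy $(\kappa = -\infty)$, and the non-primitivity for $\kappa \geq 1$ coming from step (i) of Theorem \ref{prop2} (which used $\dim |\ell K_M| \geq 2$ and Lemma \ref{lem1}), the three-case list (i)--(iii) follows, completing the proof.
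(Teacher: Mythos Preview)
Your proposal is correct and follows exactly the paper's own route: the paper's proof of Corollary \ref{cor3} is the single line ``This follows from Theorem \ref{prop2} and Remark \ref{rem1},'' and your plan is precisely an unpacking of that line, invoking the parenthetical surface-classification arguments inserted in the proof of Theorem \ref{prop2} (via \cite{BM77}) to handle arbitrary characteristic and to eliminate the (quasi-)bielliptic and ruled cases. There is nothing to add or correct.
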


\begin{proof} This follows from Theorem \ref{prop2} and Remark \ref{rem1}. 
Compare also with a pioneering work due to Cantat \cite{Ca99}. 
\end{proof} 

\begin{remark}\label{rem2}
In each case (i), (ii), (iii), there are {\it complex} projective surface $S$ and $g \in {\rm Aut}\, (S)$ such that $g$ is primitive, more strongly, of $d_1(g) > 1$. See, for instance, \cite{Mc07}, \cite{BK09}, \cite{BK12} for rational $S$, \cite{Mc11-2} for K3 surfaces, \cite{CO11} for Enriques surfaces and \cite{Mc11-1}, \cite{Re12} for abelian surfaces. See also \cite{Ca99} and \cite{Og14}. 
\end{remark}

The algebraic dimension of a compact complex manifold  $M$ is the transcendental degree of the meromorphic function field of $M$ (\cite[ChapI, Sect.3]{Ue75}). We denote it by $a(M)$. Then $a(M) \le \dim\, M$ and $M$ is algebraic if and only if $a(M) = \dim\, M$. 

\begin{corollary}\label{cor2} Let $S$ be a smooth compact, non-algebraic K\"ahler surface. If $S$ has a primitive bimeromorphic automorphism (not necessarily of infinite order) $g$, then $S$ is bimeromorphic to either:

(i) a K3 surface of algebraic dimension $0$; or

(ii) a $2$-dimensional complex torus of algebraic dimension $0$. 

\end{corollary}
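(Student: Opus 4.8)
The plan is to run the Enriques--Kodaira classification of compact K\"ahler surfaces against the two constraints imposed on $S$: that it is non-algebraic and that it carries a \emph{primitive} bimeromorphic automorphism. Throughout I would use the algebraic dimension $a(S)$, which is a bimeromorphic invariant with $a(S) \le \dim\, S = 2$ and which equals $2$ exactly when $S$ is algebraic (\cite[Chap. I, Sect. 3]{Ue75}); since $S$ is non-algebraic, $a(S) \in \{0, 1\}$.

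First I would eliminate the case $a(S) = 1$ using primitivity. When $a(S) = 1$ the algebraic reduction is a dominant meromorphic map $\alpha : S \cdots\to B$ onto a smooth projective curve $B$, which we may take with connected fibers after Stein factorization; it is then a non-trivial meromorphic fibration, as $0 < \dim\, B = 1 < 2 = \dim\, S$. The algebraic reduction is canonical, being characterized by the fact that it induces an isomorphism of meromorphic function fields, so it is functorial: the bimeromorphic automorphism $g$ descends to some $g_B \in {\rm Bir}\, (B)$ with $\alpha \circ g = g_B \circ \alpha$. Thus $g$ is imprimitive, contrary to hypothesis. Hence $a(S) = 0$.

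Next I would pin down the Kodaira dimension. As $S$ is K\"ahler, $b_1(S)$ is even and the classification applies. A surface with $\kappa(S) = -\infty$ is rational or ruled, and one with $\kappa(S) = 2$ is of general type; both are projective, so have $a = 2$. A surface with $\kappa(S) = 1$ is properly elliptic, and its Iitaka (elliptic) fibration over a curve already forces $a(S) \ge 1$. All three possibilities contradict $a(S) = 0$, so $\kappa(S) = 0$.

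Finally, passing to a minimal model $S_0$ (contracting the finitely many $(-1)$-curves, an operation preserving both $a$ and $\kappa$), $S_0$ is a minimal K\"ahler surface of Kodaira dimension $0$, hence a K3 surface, an Enriques surface, a $2$-dimensional complex torus, or a bielliptic surface. The Enriques and bielliptic surfaces are projective, so have $a = 2$ and are ruled out by $a(S_0) = a(S) = 0$. Therefore $S_0$, and with it $S$, is bimeromorphic to a K3 surface or a $2$-dimensional complex torus, necessarily of algebraic dimension $0$. I expect the only delicate point to be the equivariance (functoriality) of the algebraic reduction in the second step, since the analytic notion of primitivity must be matched with a genuine meromorphic fibration in the spirit of Lemma \ref{lem0}; the remaining steps are bookkeeping within the classification, relying on the fact that each excluded surface class is algebraic.
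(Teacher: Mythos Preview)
Your argument is correct and follows essentially the same line as the paper: first use equivariance of the algebraic reduction to force $a(S)=0$, then invoke the Kodaira classification of compact K\"ahler surfaces to land on a K3 surface or complex torus of algebraic dimension $0$. The only cosmetic difference is that the paper dispatches the cases $\kappa(S)\neq 0$ by referring back to the proof of Theorem~\ref{prop2} (primitivity versus the pluricanonical and MRC fibrations), whereas you eliminate them directly from the constraint $a(S)=0$; both are routine uses of the classification.
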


\begin{proof} As $S$ is not algebraic, $a(S) \not = 2$. As the algebraic reduction map (\cite[ChapI, Sect.3]{Ue75}) is a $g$-equivariant dominant meromorphic fibration, $a(S) \not= 1$. Hence $a(S) = 0$. 

The result now follows from the classification of compact complex K\"ahler surfaces and the proof of Theorem \ref{prop2}, in which Bombieri-Mumford's classification of algebraic surfaces is replaced by Kodaira's classification of compact complex surfaces (see eg. \cite[Chap VI]{BHPV04}). 
\end{proof} 

\begin{remark}\label{rem3}
Note that $S$ in (i), (ii) does not admit any dominant meromorphic map to a compact Riemann surface, as any compact Riemann surface is algebraic. In particular, all bimeromorphic automorphisms of $S$ are primitive in these cases. Again, in each case (i), (ii), there are a {\it complex} surface $S$ and $g \in {\rm Aut}\, (S)$ such that $g$ is primitive, {\it much more strongly}, of $d_1(g) > 1$. See \cite{Mc02-2}, \cite{Mc07} for very impressive such examples.
\end{remark}

\section{A dynamical characterization of simple abelian varieties.}

\begin{lemma}\label{lem2-1}
Let $T = (T, 0)$ be a simple abelian variety defined over an algebraically closed field $k$ with the unit element $0 \in T(k)$. Let $P \in T(k)$ be a non-torsion point and $t_P$ be the translation $t_P \in {\rm Aut}\, (T)$ associated to $P$. Then the Zariski closure of the orbit $\langle t_P \rangle \cdot Q$ is $T$ for each $Q \in T(k)$. 

The same is true for a simple complex torus $T = (T, 0)$, which is not necessarily algebraic. 
\end{lemma}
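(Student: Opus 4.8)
The plan is to prove the contrapositive: if the orbit closure $\overline{\langle t_P \rangle \cdot Q}$ were a proper closed subset of $T$, then I would extract an algebraic subgroup of $T$ distinct from $\{0\}$ and $T$, contradicting simplicity, unless $P$ is torsion. Let me think about how to set this up carefully.

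First I would reduce to the case $Q = 0$. The orbit $\langle t_P\rangle \cdot Q = \{Q + nP : n \in \Z\}$ is the translate by $Q$ of the orbit $\{nP : n \in \Z\} = \langle t_P\rangle \cdot 0$. Since translation by $Q$ is an automorphism of the variety $T$, it carries Zariski closure to Zariski closure, so $\overline{\langle t_P\rangle \cdot Q} = Q + \overline{\langle t_P\rangle \cdot 0}$. Hence it suffices to show $\overline{\langle t_P\rangle \cdot 0} = T$, i.e. that $\overline{\langle P \rangle}$, the Zariski closure of the cyclic subgroup generated by $P$, equals $T$.

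The key step is to observe that $\overline{\langle P \rangle}$ is an algebraic subgroup of $T$. This is because the set $\{nP : n \in \Z\}$ is a subgroup, and the Zariski closure of a subgroup of a group variety is again a closed subgroup variety — a standard fact, following because the group operations are morphisms and closure is preserved under them (for instance, $\overline{H} \cdot \overline{H} \subseteq \overline{H \cdot H} = \overline{H}$ and similarly for inversion). Let $G := \overline{\langle P \rangle}$, a closed algebraic subgroup of $T$. By the structure theory of abelian varieties (or complex tori), $G$ has finitely many connected components, and its identity component $G^0$ is itself an abelian subvariety (a subtorus in the analytic case). Now I invoke simplicity: the irreducible closed subgroups of $T$ are only $\{0\}$ and $T$, so $G^0 \in \{\{0\}, T\}$. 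If $G^0 = T$ then $G = T$ and we are done. The remaining case is $G^0 = \{0\}$, meaning $G$ is finite; but then $\langle P \rangle \subseteq G$ is finite, forcing $P$ to be a torsion point, contrary to hypothesis. This contradiction completes the argument.

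The main obstacle, and the point requiring the most care, is justifying that the Zariski closure of a subgroup is a subgroup and that simplicity applies to its identity component. In the algebraic setting this is clean: $G^0$ is a connected (hence irreducible, since group varieties are smooth) closed subgroup, so simplicity directly gives $G^0 \in \{\{0\}, T\}$. The phrase ``irreducible closed algebraic subgroup'' in the definition of simple matches $G^0$ precisely. For the final assertion about a simple complex torus $T = \C^m/L$ that need not be algebraic, I would run the same argument analytically: $\overline{\langle P \rangle}$ in the Euclidean-closure or Zariski-closure sense is a closed complex-analytic subgroup, its identity component is a subtorus, and simplicity of the torus forbids proper positive-dimensional subtori; one must take slight care that ``simple'' for a non-algebraic torus is phrased in terms of complex-analytic subgroups, and that finiteness of $\overline{\langle P\rangle}$ again forces $P$ to be torsion. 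I expect the genuinely delicate verification to be that no intermediate possibility arises — i.e. that $G$ cannot be a proper positive-dimensional or infinite-but-not-closed object — which is exactly what the closed-subgroup structure theory rules out.
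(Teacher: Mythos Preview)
Your proof is correct and follows essentially the same approach as the paper: both reduce to the orbit of $0$ via translation by $Q$, take the Zariski closure $G=\overline{\langle P\rangle}$ as a closed subgroup, observe that its identity component $G^0$ is an abelian subvariety (subtorus), and use simplicity together with the non-torsion hypothesis to conclude $G^0=T$. The only cosmetic difference is ordering---the paper first shows $V_P=T$ and then translates, whereas you translate first---and the paper phrases the dichotomy as ``$V_P$ has positive dimension since $P$ is non-torsion'' rather than your equivalent ``$G^0=\{0\}$ forces $G$ finite, hence $P$ torsion.''
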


\begin{proof} 
The Zariski closure $V_P$ of $\langle P \rangle \subset T$ is a projective algebraic subgroup of $T$ of positive dimension (as $P$ is not a torsion point). Hence the identity component $V_P^0$ of $V_P$ is an abelian subvariety of $T$. Here we used the fact that any irreducible algebraic subgroup of an abelian variety is an abelian subvariety. Hence $V_P^0 = T$, as $T$ is simple. Thus, $V_P = T$. 

Note that $V_P$ is the same as the Zariski closure of the orbit $\langle t_P \rangle \cdot 0$ of $0$ in $T$. As the addition by $Q$ is an automorphism of the variety $T$, the Zariski closure of the orbit $\langle t_P \rangle \cdot Q$ of $Q$ in $T$ is the same as the translation by $Q$ of $V_P = T$, the Zariski closure of the orbit $\langle t_P \rangle \cdot 0$. This implies the result.  

Proof for a simple complex torus is essentially the same as above. 
\end{proof}

We call an abelian variety or a complex torus {\it non-simple} if it is not simple. 

\begin{proposition}\label{lem2-2}
Let $T$ be a non-simple abelian variety, necessarily, of dimension $m \ge 2$. Let $P \in T(k)$ be any point. Then the translation $t_P \in {\rm Aut}\, (T)$ associated to $P$ is not primitive. 

The same holds for a non-simple complex torus. 
\end{proposition}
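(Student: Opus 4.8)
The plan is to produce explicitly a non-trivial $t_P$-equivariant fibration by quotienting $T$ by a proper non-trivial abelian subvariety. Since $T$ is non-simple, by definition there is an irreducible closed algebraic subgroup $A \subset T$ with $A \neq \{0\}$ and $A \neq T$; as recalled in the proof of Lemma \ref{lem2-1}, such an $A$ is automatically an abelian subvariety, so $0 < \dim A < m$ (which in particular forces $m \ge 2$). I would then form the quotient abelian variety $B := T/A$ together with the quotient homomorphism $\pi : T \to B$, which is surjective with kernel $A$. Its fibers are the cosets $x + A$, each isomorphic to the connected variety $A$, so $\pi$ is a fibration with connected fibers; and since $\dim B = m - \dim A$ satisfies $0 < \dim B < m$, the fibration $\pi$ is non-trivial.

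It then remains to check that $t_P$ descends along $\pi$. Because $\pi$ is a group homomorphism, for every $x \in T(k)$ one has $\pi(t_P(x)) = \pi(x+P) = \pi(x) + \pi(P)$, that is, $\pi \circ t_P = t_{\pi(P)} \circ \pi$, where $t_{\pi(P)} \in {\rm Aut}\,(B)$ is the translation by $\pi(P)$. Thus $\pi$ is a non-trivial rational fibration and $f_B := t_{\pi(P)}$ is a birational automorphism of $B$ exhibiting $t_P$ as imprimitive, in the precise sense of the definition given in Section 2.

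For a non-simple complex torus $T = \mathbb{C}^m/L$ the same argument works analytically: non-simplicity provides a proper non-trivial subtorus $A$, corresponding to a complex subspace $W \subsetneq \mathbb{C}^m$ for which $W \cap L$ is a full lattice of $W$, and $B := T/A$ is again a complex torus with $0 < \dim B < m$. The quotient map $\pi : T \to B$ is a holomorphic, hence meromorphic, fibration with connected fibers, and the identity $\pi \circ t_P = t_{\pi(P)} \circ \pi$ holds verbatim, so $t_P$ is imprimitive; by Lemma \ref{lem0} this is consistent with primitivity in the algebraic sense when $T$ happens to be projective.

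The argument has no serious obstacle: the only points deserving care are the standard existence of the quotient $T/A$ as an abelian variety (respectively as a complex torus) and the verification that $\pi$ is a genuine non-trivial fibration with connected fibers, both of which are routine. The entire content is the observation that a proper non-trivial closed subgroup of a commutative group variety automatically yields a translation-equivariant fibration, so every translation of a non-simple abelian variety or complex torus fails to be primitive.
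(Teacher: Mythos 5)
Your proof is correct and follows essentially the same route as the paper: both take a proper non-trivial abelian subvariety (your $A$, the paper's $E$), pass to the quotient $T \to T/A$, and observe that translations descend, so $t_P$ is imprimitive. Your version simply spells out the equivariance identity $\pi \circ t_P = t_{\pi(P)} \circ \pi$ and the connectedness of the fibers a bit more explicitly.
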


\begin{proof} 
By assumption, one can find an abelian subvariety of $T$ of positive dimension other than $T$ itself. We denote it by $E$. Then 
$$P + E = E + P$$ 
in $T$. Thus the natural quotient morphism $T \to T/E$ is a $t_P$-equivariant non-trivial fibration onto $T/E$, which is an abelian variety 
of dimension $0 < \dim\, T/E < \dim\, T$. Hence $t_P$ is not primitive.

Proof for a non-simple complex torus is essentially the same as above. 
\end{proof}

\begin{theorem}\label{prop2-1}
Let $T$ be a simple complex abelian variety of any dimension $m \ge 2$ or a simple abelian surface defined over any algebraically closed field $k$ of any characteristic. Let $P$ be a closed point of $T$. Then the translation $t_P \in {\rm Aut}\, (T)$ is primitive if and only if $P$ is not a torsion point. Moreover $d_1(t_P)$ is $1$.
\end{theorem}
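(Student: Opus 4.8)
The plan is to prove the two parts of Theorem~\ref{prop2-1} separately: the equivalence (primitive $\iff$ non-torsion) and the degree computation $d_1(t_P)=1$. The degree statement is the easy part, so I would dispose of it first. Since $t_P \in \mathrm{Aut}\,(T)$ is a biregular automorphism of an abelian variety, it acts on the N\'eron--Severi group (equivalently on $\mathrm{Pic}\,(T)$ modulo algebraic equivalence) trivially: translations act trivially on cohomology, or algebraically, $t_P^*$ fixes the class of every divisor up to algebraic equivalence because $t_P$ is algebraically equivalent to the identity. Consequently the pullback $t_P^*$ on the relevant intersection-theoretic lattice is the identity, so $\delta_l(t_P^n)=(H^l.H^{m-l})$ is constant in $n$, and hence $d_1(t_P)=\limsup_n (\delta_1(t_P^n))^{1/n}=1$. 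This holds for a simple abelian surface over any $k$ and for a complex abelian variety of any dimension.

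For the equivalence, the direction I would handle first is the contrapositive of ``primitive $\Rightarrow$ non-torsion''. If $P$ is a torsion point, then $t_P$ is of finite order, and by Lemma~\ref{lem1}(3) a birational automorphism of finite order of a variety of dimension $\ge 2$ is not primitive; so $t_P$ is not primitive, giving ``primitive $\Rightarrow$ non-torsion''. (Here $\dim T = m \ge 2$ in both cases covered by the statement, so Lemma~\ref{lem1}(3) applies.) Thus only the reverse implication ``non-torsion $\Rightarrow$ primitive'' requires real work.

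For ``non-torsion $\Rightarrow$ primitive'', I would argue by contradiction. Suppose $P$ is non-torsion but $t_P$ is imprimitive. Then there is a non-trivial rational fibration $\varphi : T \dashrightarrow B$ with $0 < \dim B < \dim T$ and a birational $f_B \in \mathrm{Bir}\,(B)$ with $\varphi \circ t_P = f_B \circ \varphi$. The key tool is Lemma~\ref{lem2-1}: for a simple $T$ and non-torsion $P$, every orbit $\langle t_P\rangle\cdot Q$ is Zariski dense in $T$. The plan is to exploit this density to show $\varphi$ must collapse $T$ to a point, contradicting $\dim B>0$. Concretely, take a general fiber $F=\varphi^{-1}(b)$, a positive-dimensional proper closed subset of $T$; the equivariance implies $t_P$ permutes the fibers according to $f_B$, so the orbit of $F$ under $\langle t_P\rangle$ is contained in the union of the $\langle f_B\rangle$-orbit of fibers. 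Picking a point $Q$ on a general fiber, its entire $\langle t_P\rangle$-orbit then lies in a countable (or low-dimensional) union of proper subvarieties coming from the base dynamics, which contradicts the Zariski density of $\langle t_P\rangle \cdot Q = T$ unless $\dim B = 0$.

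The main obstacle I anticipate is making the contradiction clean and purely algebraic (valid over $\overline{\F_q(C)}$ as well as $\C$), since the naive ``countable union of fibers'' argument uses that $k$ is uncountable, which fails for $\overline{\Q}$ or function-field closures. I expect the fix is to work not fiber-by-fiber but with the closure of a single orbit: the image $\varphi(\overline{\langle t_P\rangle\cdot Q})$ equals the closure of the $\langle f_B\rangle$-orbit of $\varphi(Q)$ in $B$, and since the left side is $\varphi(T)=B$ by density, one deduces that $\langle f_B\rangle\cdot \varphi(Q)$ is Zariski dense in $B$ for general $Q$. The delicate step is then to leverage the fibration structure together with simplicity of $T$ to force $f_B$ to have the same ``every orbit dense'' behavior and derive a contradiction with the existence of a positive-dimensional fiber. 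I would likely reduce to a group-theoretic statement by first replacing $(T,t_P)$ by a biregular model and arguing that any $t_P$-equivariant fibration on an abelian variety is, after translation, compatible with a quotient $T\to T/E$ by an abelian subvariety $E$; simplicity forces $E\in\{0,T\}$, i.e.\ $\dim B\in\{0,\dim T\}$, contradicting non-triviality. Carrying out this reduction rigorously in the birational (rather than regular) category is where I expect the technical difficulty to concentrate.
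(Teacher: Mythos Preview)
Your treatment of $d_1(t_P)=1$ and of the implication ``primitive $\Rightarrow$ non-torsion'' via Lemma~\ref{lem1}(3) is correct and coincides with the paper's.

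The gap is in the converse, and it is exactly where you yourself flag it. Your proposed endgame---reduce any $t_P$-equivariant rational fibration $\varphi:T\dashrightarrow B$ to a quotient $T\to T/E$ by an abelian subvariety---is precisely what one cannot do directly in the birational category, and you supply no mechanism for it. The paper does not attempt such a reduction. Instead it bifurcates on whether $\varphi$ is \emph{almost regular}, meaning the exceptional locus of a resolution of $I(\varphi)$ does not dominate $B$.

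When $\varphi$ is almost regular, the general fibre $F_b$ is an honest proper subvariety of $T$ with trivial dualizing sheaf. Over $\C$, Ueno's classification of subvarieties of complex tori forces $F_b$ to be a translate of an abelian subvariety; in the surface case over arbitrary $k$, the fibre is a smooth curve of genus one (since $T$ contains no rational curves). Either way simplicity of $T$ is contradicted. This is the only case your ``reduce to $T/E$'' intuition actually handles.

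When $\varphi$ is not almost regular, the argument is completely different and is where your outline stalls. For surfaces over any $k$: the exceptional curves of a resolution of $I(\varphi)$ are rational and one of them dominates $B$, so $B\simeq\BP^1$ by L\"uroth; then $t_B\in\mathrm{PGL}(2,k)$ has a fixed point $b$, one checks $t_P(F_b)\subset F_b$, and the orbit of any $Q\in F_b(k)$ is trapped in the proper closed set $F_b$, contradicting Lemma~\ref{lem2-1}. This is purely algebraic and needs no hypothesis on the cardinality of $k$, which is what makes the surface statement work over $\overline{\F_q(C)}$. For $m\ge 3$ over $\C$ the paper \emph{does} exploit uncountability, but not via a union of fibres: one produces points $b\ne c$ with $Q\in F_b\cap F_c$, observes $Q\in I(\varphi)$, and shows $t_P^n(Q)\in I(\varphi)$ for all $n\in\Z$; the orbit is trapped in the fixed proper closed set $I(\varphi)$, again contradicting Lemma~\ref{lem2-1}. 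Your ``countable union of fibres'' picture is in the right spirit, but the correct invariant closed set to trap the orbit in is $I(\varphi)$ (respectively a single fibre over a fixed point of $t_B$), not an orbit of fibres.
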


\begin{proof} 
The last statement directly follows from the definition of the first dynamical degree. {\it Only if part} follows from  Lemma \ref{lem1} (3). 

We show {\it if part}. Assume to the contrary that there is a $t_P$-equivariant rational fibration 
$$\varphi : T \cdots\to B$$ 
with $0 < \dim B < \dim T$ to a normal projective variety $B$. Let $t_B$ be the birational automorphism of $B$ induced by $t_P$. We may assume that $B$ is smooth by a resolution of singularities in both cases. 

Let $\nu : V \to T$ be a Hironaka's resolution of the indeterminacy $I(\varphi)$, i.e., a Hironaka's resolution of the graph $\Gamma_{\varphi}$ of $\varphi$, in both cases. 

Then $\nu^{-1}(I(\varphi))$ is a Zariski closed subset of $V$ of pure codimension one and 
$$V \setminus \nu^{-1}(I(\varphi)) \simeq T \setminus I(\varphi)$$ 
under $\nu$. Let 
$$\psi := \varphi \circ \nu : V \to B$$
 be the induced morphism. $\psi$ is proper and surjective by the construction. Therefore, $\psi(\nu^{-1}(I(\varphi))$ is a closed algebraic subset of $B$. 

Let $b$ be a closed point of $B$. Recall that the fiber $F_b$ of $\varphi : T \cdots\to B$ over $b$ is defined by 
$$F_b := \nu(\psi^{-1}(b)) \subset T\,\, .$$ 
This is a Zariski closed subset of $T$. 

We call $\varphi$ is {\it almost regular} if $\psi(\nu^{-1}(I(\varphi))) \not= B$, i.e., 
if $\psi(\nu^{-1}(I(\varphi))$ is of codimension $\ge 1$ in $B$. 

Let $b$ be a general closed point of $B$. If $\varphi$ is almost regular, then 
$0 < \dim\, F_b < \dim\, T$ and the dualizing sheaf of $F_b$ is trivial by the adjunction formula. If $k = \C$, then $F_b$ is a translation of an abelian subvariety of $T$. This is due to the classification of subvariety of complex tori \cite[Chap. IV, Sect 10]{Ue75}. However, this is impossible, as $T$ is simple. If $m=2$ and $k$ is any field, then $F_b$ is a smooth elliptic curve, as $T$ has no rational curve. However, this is again impossible, as $T$ is simple. 

From now, we may and will assume that $\varphi$ is {\it not} almost regular. 

Let $E = \cup_{l=1}^{n} E_l$ be the union of all irreducible components of $\nu^{-1}(I(\varphi))$ such that $\psi(E_l) = B$ and $\nu(E_l)$ is an irreducible component of $I(\varphi)$. By the definition of $I(\varphi)$ and by the fact that $\varphi$ is not almost regular, $E$ is non-empty. 

First consider the case where $m=2$ and $k$ is any algebraically closed field. Then $\nu$ is a finite composition of blowing-ups of closed points. Thus $E$ is a union of rational curves. Hence $B$ is $\BP^1$ by L\"uroth's theorem. Then $t_B \in {\rm PGL}(2, k)$ as
$${\rm Bir}\, (\BP^1) = {\rm Aut}\, (\BP^1) = {\rm PGL}\, (2, k)\,\, .$$
Thus $t_B(b) = b$ for some closed point $b$ of $B = \BP^1$. Indeed, the closed point corresponding to an eigenvector of the matrix $\tilde{g}$ representing $g$ is such a point. Thus $t_P(Q) \in F_b(k)$ for any 
$$Q \in F_b^0 := F_b(k) \setminus (I(\varphi) 
\cup t_P^{-1}(I(\varphi)))\,\, .$$
Here $I(\varphi) \cup t_P^{-1}(I(\varphi))$ is a finite set of closed points. Thus $F_b^0$ is a Zariski dense subset of $F_b$. Hence 
$$t_P(F_b) \subset F_b$$
 as both $F_b$ and $F_b$ are Zariski closed and $t_P$ is an automorphism, in particular, a closed map in the Zariski topology. The same is true for $t_P^{-1}$. Hence $$\langle t_P \rangle \cdot Q \subset F_b(k)$$ 
for any $Q \in F_b(k)$. Again, as $F_b$ is Zariski closed in $T$, the Zariski closure of $\langle t_P \rangle \cdot Q$ in $T$ is then a subset of $F_b$. As $F_b \not= T$, this contradicts Lemma \ref{lem2-1}. Hence $t_P$ is primitive if $m=2$.

Let us consider the case where $m \ge 3$ and $k = \C$. Recall that $\C$ is uncountable and of characteristic $0$. Consider the set 
$$B^0 = B(\C) \setminus \cup_{n \in \Z} t_B^n(I(t_B) \cup I(t_B^{-1}) \cup R \cup J)\,\, .$$
Here $R$ is the subset of $B(\C)$ consisting of points $b \in B(\C)$ such that $F_b$ is not irreducible, and $J$ is the subset of $B(\C) \setminus R$ consisting of points $b$ such that $F_b \subset I(\varphi)$. Here we note that 
$$I(t_B^m) \subset \cup_{n \in \Z} t_B^n(I(t_B) \cup I(t_B^{-1}))$$ 
for each $m \in \Z$ and $F_b$ is irreducible if $b \in B(\C) \setminus R$. Then the Zariski closure of $I(t_B) \cup R \cup J$ in $B$ is of codimension $\ge 1$. Hence so is for $t_B^n(I(t_B) \cup R \cup J)$ for each $n \in \Z$, as $t_P^{\pm}$ are automorphisms. Since $\C$ is uncountable and $\Z$ is countable, it follows that $B^0$ 
is a Zariski dense subset of $B$. Also, by the definition, the map
$$t_B^n |B^0 : B^0 \to B^0\, (\subset B)$$
 is set-theoretically well-defined for all $n \in {\mathbb Z}$. As all $t_P^n$, hence $t_B^n$, are birational, $B^0$ contains a complement of a countable union of codimension $\ge 1$ Zariski closed subsets. Again, as $\C$ is uncountable, there is then a Zariski dense subset $B^1 \subset B^0$ such that 
$$t_B^n |B^1 : B^1 \to B^0$$ 
is injective for all $n \in {\mathbb Z}$. Choose the largest such $B^1$. 

If $F_b \cap F_c = \emptyset$ for any $b \not= c \in B^1$, then $\varphi$ is everywhere defined over $B^1$. Hence $\psi(\nu^{-1}(I(\varphi)))$ is in $B \setminus B^1$, a contradiction to the assumption that $\psi(E) = B$. 

From now, we may and will assume that there are $b \not= c \in B^1$ such that $F_b \cap F_c$ contains a closed point, say $Q$. 

Then $Q \in I(\varphi)(\C)$ by the definition of $I(\varphi)$. Recall that $t_B \circ \varphi = \varphi \circ t_P$ as rational maps, $t_P^n$ is regular and $t_B^n : B_1 \to B_0$ is everywhere defined. Then, for each fixed $n$, the point $t_P^n(Q)$ is a well-defined point of $t_P^n(F_b)(\C) \cap t_P^n(F_c)(\C)$. We have
$$t_P^n(R) \in F_{t_B^n(b)}(\C)$$ 
for any points $R$ such that 
$$R \in F_b(\C) \setminus (I(\varphi) \cup t_P^{-n}(I(\varphi))\,\, .$$
As $b \in B^0$, such points $R$ form a Zariski dense subset of $F_b$. 
Hence 
$$t_P^n(S) \in F_{t_B^n(b)}(\C)$$
for {\it every} point $S \in F_b(\C)$. This is because $F_b$, $F_{t_B^n(b)}$ are both Zariski closed and $t_P^n$ is an automorphism, in particualr, a closed map in Zariski topology.  Thus
$$t_P^n(F_b) \subset F_{t_B^n(b)}\,\, .$$
For the same reason, we have
$$t_P^n(F_c) \subset F_{t_B^n(c)}\,\, .$$
Hence 
$$t_P^n(Q) \in F_{t_B^n(b)}(\C) \cap F_{t_B^n(c)}(\C)\,\, .$$
By the definition of $B^1$, $F_{t_B^n(b)} \not= F_{t_B^n(c)}$. 
Hence  
$t_{P}^n(Q) \in I(\varphi)(\C)$ for each fixed $n \in \Z$. Since $I(\varphi)$ is independent of $n$, it follows that $t_{P}^n(Q) \in I(\varphi)(\C)$ for all 
$n \in \Z$, 
that is,
$$\langle t_P \rangle \cdot Q \subset I(\varphi)(\C)\,\, .$$
As $I(\varphi)$ is Zariski closed in $T$, the Zariski closure of $\langle t_P \rangle \cdot Q$ is then a subset of $I(\varphi)$. As $I(\varphi) \not= T$, 
this contradicts Lemma \ref{lem2-1}. This completes the proof of Theorem \ref{thm1}.
\end{proof}

The following remark is kindly pointed out by Doctor Xun Yu:
\begin{remark}\label{xu} The proof here implies the following:

{\it Let $X$ be a smooth complex variety and $f \in {\rm Aut}\, (X)$ be an automorphism of $X$ such that for every $x \in X(\C)$, the orbit $\langle f \rangle \cdot x$ is Zariski dense in $X$. Then, either $f$ is primitive or $X$ admits a $f$-equivariant almost regular fibration $X \cdots\to B$.}

It might be interesting to find an explicit example of the later alternative. 
\end{remark}

\begin{corollary}\label{cor2-1}
Let $T$ be an abelian variety defined over $\C$ of positive dimension. Let $P \in T(\C)$ be a non-torsion point. Then $T$ is simple if and only if the translation $t_P \in {\rm Aut}\, (T)$ is primitive.
\end{corollary}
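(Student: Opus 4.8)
The plan is to obtain the statement as an immediate synthesis of Theorem \ref{prop2-1} and Proposition \ref{lem2-2}, the only genuine extra work being to treat the one-dimensional case, which lies outside the range $m \ge 2$ covered by those two results. So I would split the argument according to whether $\dim T = 1$ or $\dim T \ge 2$.

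First I would dispose of the case $\dim T = 1$. Here $T$ is an elliptic curve; it has no irreducible closed algebraic subgroup other than $\{0\}$ and $T$, hence is simple, so the left-hand side of the equivalence holds. On the other hand, any non-trivial rational fibration $\varphi : T \cdots\to B$ would require $0 < \dim B < 1$, which is impossible, so $t_P$ is vacuously primitive and the right-hand side holds as well. Thus the equivalence is trivially true when $\dim T = 1$, and from now on I assume $m = \dim T \ge 2$.

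For the direction ``$T$ simple $\Rightarrow t_P$ primitive'' I would simply invoke Theorem \ref{prop2-1}: since $T$ is a simple complex abelian variety of dimension $m \ge 2$ and $P$ is non-torsion by hypothesis, that theorem yields at once that $t_P$ is primitive. For the converse I would argue by contraposition. If $T$ is not simple then $\dim T \ge 2$ automatically (a one-dimensional abelian variety is simple, as just noted), so Proposition \ref{lem2-2} applies and shows that $t_P$ is not primitive; note that this conclusion holds for the given $P$ irrespective of its order, so the non-torsion hypothesis is needed only in the forward direction. Combining the two implications with the dimension-one case completes the equivalence.

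I do not expect a real obstacle, since all the dynamical content is already packaged in Theorem \ref{prop2-1} and Proposition \ref{lem2-2}; the single point deserving a word of care is the degenerate case $\dim T = 1$, where primitivity is vacuous because a curve admits no non-trivial rational fibration and where the non-torsion hypothesis on $P$ plays no role.
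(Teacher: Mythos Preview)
Your proof is correct and follows the same approach as the paper, which simply records the corollary as a direct consequence of Proposition~\ref{lem2-2} and Theorem~\ref{prop2-1}. Your explicit treatment of the case $\dim T = 1$ is a small but welcome addition, since the paper's two cited results only cover $m \ge 2$.
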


\begin{proof} This is a direct consequence of Proposition \ref{lem2-2} and Theorem \ref{prop2-1}. 
\end{proof}

Let $T$ be a complex abelian variety of positive dimension. We note that there is a non-torsion point $P \in T(\C)$. This is because the set of torsion points of $T(\C)$ is countable, while $T(\C)$ is uncountable. 

Let $\overline{\Q}$ be an algebraic closure of the field $\Q$ of rational numbers, i.e., an algebraic closure of the prime field of characteristic $0$. Then $\overline{\Q}$ is algebraically closed field but countable. However:

\begin{corollary}\label{qu2-1}
Let $T$ be an $m$-dimensional simple abelian variety defined over $\overline{\Q}$ such that $m \ge 2$. Let $P \in T(\overline{\Q})$ be a non-torsion point. Then the translation $t_P \in {\rm Aut}\, (T)$ is primitive.
\end{corollary}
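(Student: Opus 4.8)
The plan is to reduce the statement to the complex case already settled in Theorem \ref{prop2-1} by base change along the inclusion $\overline{\Q} \hookrightarrow \C$. Concretely, I would set $T_\C := T \times_{\overline{\Q}} \C$, an $m$-dimensional abelian variety over $\C$, and let $P_\C \in T_\C(\C)$ be the image of $P$ under the injection $T(\overline{\Q}) \hookrightarrow T_\C(\C)$. Since the multiplication-by-$n$ maps are defined over $\overline{\Q}$ and this injection is compatible with the group law, $nP \neq 0$ for all $n \ge 1$ forces $nP_\C \neq 0$; hence $P_\C$ is again non-torsion. The whole point is that the dimension-$\ge 3$ half of Theorem \ref{prop2-1} is available over $\C$ but not over $\overline{\Q}$, so I want to transport the problem to $\C$, apply it there, and argue back.

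The decisive point is that $T_\C$ remains simple. For this I would invoke the invariance of the endomorphism algebra of an abelian variety under an extension of algebraically closed base fields of characteristic $0$: the natural map $\End(T) \otimes \Q \to \End(T_\C) \otimes \Q$ is an isomorphism. This is a standard fact, since every endomorphism of $T_\C$ is defined over a finitely generated subextension of $\C/\overline{\Q}$ and, the transcendental parameters being specializable to $\overline{\Q}$, already descends to $\overline{\Q}$. As $T$ is simple, $\End(T)\otimes\Q$ is a division algebra and so has no nontrivial idempotents; by the isomorphism the same holds for $\End(T_\C)\otimes\Q$, whence $T_\C$ carries no nontrivial abelian subvariety up to isogeny, i.e. $T_\C$ is simple. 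Equivalently, any abelian subvariety of $T_\C$ descends to $\overline{\Q}$, so simplicity over $\overline{\Q}$ propagates to $\C$.

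With $T_\C$ simple of dimension $m \ge 2$ and $P_\C$ non-torsion, Theorem \ref{prop2-1} yields that $t_{P_\C}$ is primitive on $T_\C$. To finish I would argue by contraposition: if $t_P$ were imprimitive over $\overline{\Q}$, there would be a $t_P$-equivariant non-trivial rational fibration $\varphi : T \cdots\to B$ with $0 < \dim B < m$ and induced $t_B \in {\rm Bir}\,(B)$ satisfying $\varphi \circ t_P = t_B \circ \varphi$. Base-changing all of these data to $\C$ produces $\varphi_\C : T_\C \cdots\to B_\C$ together with $t_{B_\C} \in {\rm Bir}\,(B_\C)$ and $\varphi_\C \circ t_{P_\C} = t_{B_\C} \circ \varphi_\C$, where $0 < \dim B_\C = \dim B < m$. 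Since $p = 0$, the general fibre of $\varphi$ is irreducible by Bertini and remains so after base change to $\C$, so $\varphi_\C$ is again a non-trivial rational fibration. Thus $t_{P_\C}$ would be imprimitive, contradicting its primitivity; hence $t_P$ is primitive.

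The main obstacle is exactly the simplicity of $T_\C$. The argument for the dimension-$\ge 3$ case in Theorem \ref{prop2-1} genuinely exploits the uncountability of $\C$ (to extract Zariski-dense sets from complements of countable unions of closed subsets), a property $\overline{\Q}$ lacks, so there is no hope of running it directly over $\overline{\Q}$; everything therefore hinges on transporting simplicity faithfully from $\overline{\Q}$ to $\C$ via the endomorphism algebra, after which the complex result applies. The base change of the fibration in the converse direction is routine once geometric irreducibility of the general fibre is recorded.
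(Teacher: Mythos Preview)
Your argument is correct and follows essentially the same route as the paper: base change along $\overline{\Q}\hookrightarrow\C$, observe that $T_\C$ stays simple because $\End(T)\to\End(T_\C)$ is an isomorphism over algebraically closed fields, and then invoke Theorem~\ref{prop2-1} to derive a contradiction from any $t_P$-equivariant non-trivial fibration. The paper's proof is terser but structurally identical; your extra care with non-torsion of $P_\C$ and fibre irreducibility is fine but not strictly needed.
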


\begin{proof} Assume to the contrary that we have a $t_P$ equivariant non-trivial rational fibration $T \cdots\to B$. We may assume that $\overline{\Q} \subset \C$. Then, by taking the fiber product $* \times_{{\rm Spec}\, \overline{\Q}} {\rm Spec}\, \C$, we have a $t_P$-equivariant non-trivial rational fibration 
$T_{\C} \cdots\to B_{\C}$ from a complex abelain variety $T_{\C}$, a contradiction to Theorem \ref{prop2-1}. We note that the natural morphism ${\rm End}(T) 
\to {\rm End}\, (T_{\C})$ is an isomorphism, as $\overline{\Q}$ is algebraically closed, so that $T_{\C}$ is simple as well over $\C$. 
\end{proof}

Next consider the case where $k = \overline{\F_q(C)}$, an algebraic closure of the function field of a smooth projective curve $C$ defined over a finite field $\F_q$ of characteristic $p > 0$. 

\begin{corollary}\label{qu2-4}
Let $T$ be a simple abelian surface defined over $\overline{\F_q(C)}$. Let $P \in T(\overline{\F_q(C)})$ be a non-torsion point. Then the translation $t_P \in {\rm Aut}\, (T)$ is primitive.
\end{corollary}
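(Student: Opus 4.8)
The plan is to recognize that Corollary \ref{qu2-4} is an immediate consequence of Theorem \ref{prop2-1}. The field $\overline{\F_q(C)}$ is an algebraically closed field of characteristic $p > 0$, and $T$ is by hypothesis a simple abelian surface defined over it. Since Theorem \ref{prop2-1} asserts that the translation $t_P$ of a simple abelian surface over \emph{any} algebraically closed field of \emph{any} characteristic is primitive precisely when $P$ is not a torsion point, and $P$ is non-torsion by assumption, I would conclude at once that $t_P$ is primitive. So at the level of bookkeeping there is essentially nothing to do beyond quoting the theorem.

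The one point I would take care to verify is that the surface case ($m = 2$) of Theorem \ref{prop2-1} genuinely applies in this setting, that is, that its proof does not secretly require the base field to be uncountable. Inspecting that argument, the surface part proceeds by resolving the indeterminacy of the hypothetical $t_P$-equivariant fibration $\varphi : T \cdots\to B$ (classical for surfaces in any characteristic), observing that the exceptional locus $E$ is a union of rational curves so that $B \simeq \BP^1$ by L\"uroth's theorem, exhibiting a fixed closed point of $t_B \in {\rm PGL}\, (2, k)$ via an eigenvector of a representing matrix, and finally invoking Lemma \ref{lem2-1} to derive a contradiction from the resulting $t_P$-invariant proper closed subset of $T$. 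Each of these steps is valid over an arbitrary algebraically closed field, and none of them appeals to uncountability.

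This is in deliberate contrast to the situation in Corollary \ref{qu2-1}, where the higher-dimensional case ($m \ge 3$) of Theorem \ref{prop2-1} was established only over $\C$, using the uncountability of $\C$ to construct the dense subsets $B^0$ and $B^1$; there one had to base-change from $\overline{\Q}$ to $\C$ in order to transport the conclusion. In positive characteristic no such embedding into $\C$ is available, so that detour is simply not an option here. The role of restricting to surfaces is precisely that it makes the detour unnecessary: the surface argument is already characteristic-free and uncountability-free, and therefore covers $\overline{\F_q(C)}$ directly, countable though this field may be. Consequently the only conceivable obstacle, namely confirming that the surface case of Theorem \ref{prop2-1} does not tacitly assume $k = \C$ or $k$ uncountable, dissolves upon inspection, and the corollary follows with no further work.
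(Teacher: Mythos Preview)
Your proposal is correct and matches the paper's own proof, which simply observes that this is a special case of Theorem~\ref{prop2-1} since $T$ is a simple abelian surface over the algebraically closed field $\overline{\F_q(C)}$. Your additional discussion verifying that the surface case of Theorem~\ref{prop2-1} is characteristic-free and uncountability-free is accurate and helpful commentary, but the core argument is identical.
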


\begin{proof} As $T$ is a simple abelian surface over $\overline{\F_q(C)}$, this is a special case of Theorem \ref{prop2-1}.
\end{proof}

We note that Corollaries \ref{qu2-1}, \ref{qu2-4} are non-empty: Firstly, by \cite{Mo77} and \cite{HZ02}, there is an $m$-dimensional simple abelian variety defined over any algebraically closed field for any positive integer $m$. Secondly, there is certainly a non-torsion point by the following observation due to G. Frey and M. Jarden \cite{FJ72}: a more elementary proof for $\overline{\Q}$ is given by J. P. Serre, both of which Professor H\'el\`ene Esnault kindly informed me after sending her my preliminary note. For $\overline{\Q}$, this is now also an immediate corollary of a very deep result of Raynaud \cite{Ra83}:

\begin{proposition}\label{qu2-2}
Let $T$ be an $m$-dimensional abelian variety defined over $k = \overline{\Q}$, or more generally over any algebraically closed field $k$ that is not an algebraic closure of a finite field. Assume that $m \ge 1$. Then, there is a non-torsion point $P \in T(\overline{\Q})$ (resp. $P \in T(\overline{\F_q(C)})$).
\end{proposition}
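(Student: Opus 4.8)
The plan is to first record why the hypothesis on $k$ is exactly what is needed, and then to produce a non-torsion point by geometric means, treating the two named fields separately. Over $\overline{\F}_p$ every closed point of $T$ is defined over some finite subfield $\F_{p^n}$, hence lies in the finite group $T(\F_{p^n})$ and is therefore torsion; this is precisely the phenomenon that the hypothesis ``$k$ is not an algebraic closure of a finite field'' rules out, and isolating it at the outset also shows the hypothesis cannot be dropped. When $k$ is uncountable (for instance $k = \C$, covered by the ``more generally'' clause) one is done at once: $T(k)$ is uncountable, while its torsion subgroup is the union over $n$ of the finite groups $T[n](k)$ and hence countable. The subtlety is that both $\overline{\Q}$ and $\overline{\F_q(C)}$ are countable, so a genuine arithmetic or geometric input is required.

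For $k = \overline{\Q}$ I would argue via a curve together with Raynaud's theorem \cite{Ra83} (the Manin--Mumford conjecture). Assume $m \ge 2$ and choose an irreducible curve $Y \subset T$ through $0$ that is not contained in any translate of a proper abelian subvariety of $T$; a general complete-intersection curve works, and when $T$ is simple (the case needed in Corollary \ref{qu2-1}) every such $Y$ qualifies automatically, since $T$ then has no proper positive-dimensional abelian subvariety. By Raynaud's theorem the intersection $Y \cap T_{\mathrm{tors}}$ is finite, whereas $Y(\overline{\Q})$ is infinite because $Y$ is a positive-dimensional variety over the infinite field $\overline{\Q}$; hence $Y$ carries a point that is non-torsion in $T$ and defined over $\overline{\Q}$. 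The elliptic case $m = 1$, where the coset argument degenerates, as well as the stronger assertion that the rank is infinite, is exactly the content of the theorem of Frey and Jarden \cite{FJ72}, which I would cite to cover it uniformly; Serre's elementary argument for $\overline{\Q}$ serves the same purpose.

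For $k = \overline{\F_q(C)}$ the cleanest elementary case is that of a constant (isotrivial) $T$, say already defined over $\overline{\F}_q$. Here I would pick any irreducible curve $D \subset T$ and let $P_\eta \in T(\overline{\F}_q(D))$ be the point obtained by restricting the inclusion $D \hookrightarrow T$ to the generic point of $D$. Then $P_\eta$ is non-torsion: if $[n]P_\eta = 0$ then $D$ would map into the finite subscheme $T[n]$, which is absurd for a curve. Since $\overline{\F}_q(D)$ has transcendence degree $1$ over $\F_p$, its algebraic closure is abstractly isomorphic to $\overline{\F_q(C)}$, so any embedding $\overline{\F}_q(D) \hookrightarrow \overline{\F_q(C)}$ transports $P_\eta$ to the desired non-torsion point, the order of a point being preserved under field extension. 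The remaining, genuinely harder, case is the non-isotrivial one, where I would either invoke the positive-characteristic analogue of the Manin--Mumford/Mordell--Lang statement after quotienting out the constant part, or simply appeal to Frey and Jarden \cite{FJ72}. I expect precisely this separation of the constant part to be the main obstacle: unlike in characteristic $0$, torsion can be Zariski dense on special subvarieties arising from the $\overline{\F}_q$-trace, so one cannot run the Raynaud-style argument verbatim and must either control the trace directly or cite the sharper function-field results.
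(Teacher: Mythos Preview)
The paper does not supply a proof of this proposition at all: it is stated as a known fact, attributed to Frey--Jarden \cite{FJ72}, with the additional remarks that Serre has an elementary argument over $\overline{\Q}$ and that the $\overline{\Q}$ case is now an immediate corollary of Raynaud \cite{Ra83}. Your proposal is therefore not so much a comparison target as an expansion of those citations, and as such it is accurate: the Raynaud sketch for $m\ge 2$ over $\overline{\Q}$ (pick a complete-intersection curve, which by adjunction has genus $\ge 2$ and so is not a coset of an elliptic subvariety, and apply Manin--Mumford) is exactly the ``immediate corollary'' the paper alludes to, and you correctly isolate $m=1$ as the case requiring Frey--Jarden or Serre's argument directly.

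Your generic-point construction for the isotrivial case over $\overline{\F_q(C)}$ is a pleasant addition the paper does not mention, and it is correct as stated; the identification of algebraic closures via equal transcendence degree is the right way to transport the point. The honest concession that the non-isotrivial positive-characteristic case ultimately falls back on \cite{FJ72} (or the function-field Mordell--Lang machinery) matches the paper's own reliance on that reference for the general statement. One small stylistic point: since the proposition is phrased for \emph{any} algebraically closed $k$ not an algebraic closure of a finite field, and you only explicitly handle uncountable $k$, $\overline{\Q}$, and $\overline{\F_q(C)}$, you might note that the Raynaud argument goes through verbatim for any such $k$ in characteristic~$0$, and that in positive characteristic the Frey--Jarden citation is what carries the full generality---but this is exactly how the paper itself treats it.
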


The following two facts should be also well known to the experts:

\begin{proposition}\label{qu2-3}
Let $T$ be an $m$-dimensional abelian variety defined over $k = \overline{\F}_p$. Then, there is no non-torsion point $P \in T(\overline{\F}_p)$.
\end{proposition}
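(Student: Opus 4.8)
The plan is to exploit the two defining features of $\overline{\F}_p$: it is the union $\bigcup_{n \ge 1} \F_{p^n}$ of all finite fields of characteristic $p$, and each variety of finite type over a finite field has only finitely many rational points. Concretely, I want to show that every closed point of $T$ already lies in the \emph{finite} group $T(\F_{p^n})$ for a suitable $n$, so that it is automatically of finite order.

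First I would note that $T$, together with its group law $+$ and unit $0$, is defined over a finite subfield $\F_{p^{n_0}} \subset \overline{\F}_p$. Indeed, $T$ is cut out in some projective space by finitely many equations and the morphisms defining the group structure are given by finitely many polynomials, all involving only finitely many elements of $\overline{\F}_p$; these generate a finite extension $\F_{p^{n_0}}$ of the prime field. Thus $T$ is the base change to $\overline{\F}_p$ of an abelian variety over $\F_{p^{n_0}}$, and in particular $T(\F_{p^n})$ is a subgroup of $T(\overline{\F}_p)$ for every multiple $n$ of $n_0$.

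Next, let $P \in T(\overline{\F}_p)$ be an arbitrary closed point, embedded via a fixed projective embedding of $T$ with homogeneous coordinates in $\overline{\F}_p$. Since these finitely many coordinates lie in a finite extension of $\F_p$, we may choose $n$ divisible by $n_0$ with $P \in T(\F_{p^n})$. Now $T(\F_{p^n}) \subset \BP^N(\F_{p^n})$ is contained in the finite set of $\F_{p^n}$-rational points of projective space, hence is finite; and by the previous step it is a group. Therefore $P$ has finite order dividing the integer $|T(\F_{p^n})|$, so $P$ is a torsion point. As $P$ was arbitrary, $T(\overline{\F}_p)$ consists entirely of torsion points, proving the proposition.

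There is essentially no serious obstacle here; the only point requiring a moment's care is that one must descend not merely the \emph{set} $T(\F_{p^n})$ but also the group operations to $\F_{p^n}$, so that $T(\F_{p^n})$ is genuinely a finite \emph{subgroup} of $T(\overline{\F}_p)$ rather than just a finite subset. This is exactly what the spreading-out argument of the first step provides. (One could alternatively phrase the finiteness via the Frobenius isogeny $\Phi = \Phi_{p^n}$, identifying $T(\F_{p^n})$ with $\ker(1 - \Phi)$, which is finite because $1 - \Phi$ is an isogeny; but the counting argument above is the most direct route.)
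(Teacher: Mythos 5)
Your proposal is correct and is essentially the same argument as the paper's: descend $T$ and its group law to a finite field $\F_q$, observe that any closed point lies in the finite group $T(\F_{q^s})$ for some $s$, and conclude it is torsion. The paper states this more tersely but the content is identical.
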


\begin{proof} Proof here is also kindly informed by Professor H\'el\`ene Esnault. Any variety V of finite type defined over $\overline{\F}_p$ is defined over a finite field $\F_q$ for some $q = p^m$, thus $V(\F_{q^s})$ is finite 
for all $s \ge 1$. So if $V = T$ is an abelian variety, then $T(\F_{q^s})$ is a finite group. As $P \in T(\F_{q^s})$ for some $s$, $P$ is a torsion point. 
\end{proof}

\begin{proposition}\label{prop2-2}
Let $T$ be a non-projective simple complex torus. Then $T$ admits no non-trivial meromorphic fibration. In particular, $a(T) = 0$ and any bimeromorphic automorphism of $T$, which is necessarily biholomorphic, is primitive.
\end{proposition}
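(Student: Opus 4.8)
The plan is to prove Proposition \ref{prop2-2} by showing that a non-projective simple complex torus $T$ cannot carry any non-trivial meromorphic fibration, and then deducing the statements about the algebraic dimension and primitivity as easy consequences. Suppose for contradiction that there is a non-trivial meromorphic fibration $\varphi : T \cdots\to B$ onto a compact complex analytic space $B$ with $0 < \dim B < \dim T$. By Lemma \ref{lem0}, after a bimeromorphic modification of the base we may assume $B$ is (the analytification of) a projective variety and $\varphi$ is a dominant rational map; passing to the Stein factorization if necessary, we keep $\varphi$ a fibration. The point is to analyze the general fiber $F_b$ together with the image $B$.

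First I would recall the structure of subvarieties of a complex torus: by \cite[Chap. IV, Sect. 10]{Ue75}, any irreducible subvariety $W \subset T$ has a well-defined \emph{stabilizer} subtorus, and more precisely the general fiber of a meromorphic fibration from $T$ is, up to translation, governed by the abelian-subtorus structure. The cleanest route is to use that the image $B$ of a meromorphic fibration from a torus inherits algebraicity and that the fibration, after the resolution $\nu : V \to T$ of the indeterminacy as in the proof of Theorem \ref{prop2-1}, has general fiber $F_b$ with trivial dualizing sheaf by adjunction (since $K_T = 0$ and the exceptional divisors do not meet the general fiber). Hence the general fiber $F_b$ is a subvariety of $T$ with trivial canonical bundle, which by the classification of subvarieties of complex tori forces $F_b$ to be a translate of a proper subtorus $S \subsetneq T$ of positive dimension. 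Because $T$ is simple, the only subtori are $\{0\}$ and $T$, so $F_b$ is either a point or all of $T$; either possibility contradicts $0 < \dim F_b = \dim T - \dim B < \dim T$. This contradiction shows no non-trivial meromorphic fibration exists.

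The remaining statements follow immediately. The algebraic reduction map $T \cdots\to T_{\mathrm{alg}}$ is a meromorphic fibration with $\dim T_{\mathrm{alg}} = a(T)$ (\cite[Chap. I, Sect. 3]{Ue75}); since no non-trivial such fibration exists and $T$ is non-projective so $a(T) \neq \dim T$, we must have $a(T) = 0$. Finally, let $g$ be any bimeromorphic automorphism of $T$; since $T$ is a complex torus it contains no rational curves, so $g$ cannot contract nor extract any divisor and is therefore biholomorphic. Were $g$ imprimitive, it would provide, by definition, a $g$-equivariant non-trivial meromorphic fibration $T \cdots\to B$ with $0 < \dim B < \dim T$, which we have just excluded. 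Hence every bimeromorphic automorphism of $T$ is primitive.

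The main obstacle is the step identifying the general fiber of a meromorphic (not holomorphic) fibration with a translated subtorus: one must be careful that the indeterminacy of $\varphi$ does not disturb the adjunction argument on the general fiber and that the classification of \cite[Chap. IV, Sect. 10]{Ue75} applies to the (possibly singular, but with trivial dualizing sheaf) general fiber rather than only to smooth subtori. I expect this to be handled exactly as in the almost-regular case of the proof of Theorem \ref{prop2-1}, where the same adjunction-plus-simplicity argument is used, so the proof can invoke that reasoning essentially verbatim.
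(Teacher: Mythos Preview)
Your argument has a genuine gap in the non-almost-regular case. The assertion ``the exceptional divisors do not meet the general fiber'' is precisely the statement that $\varphi$ is almost regular, and you have not justified it. When $\varphi$ is \emph{not} almost regular, some exceptional divisor $E_i$ of $\nu$ dominates $B$, so for general $b$ the fiber $G_b := \psi^{-1}(b)$ meets $E_i$ in a nonzero divisor. Adjunction then gives $K_{G_b} = (\sum a_i E_i)|_{G_b}$ effective and nonzero, so the fiber does \emph{not} have trivial canonical sheaf and the conclusion ``$F_b$ is a translate of a subtorus'' fails. In fact Ueno's theorem, applied the other way, says that every proper irreducible subvariety of a \emph{simple} torus is of general type; so in the non-almost-regular case the fiber is of general type, and no contradiction with simplicity arises along your line. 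Your hope that this case ``can be handled exactly as in the almost-regular case of Theorem \ref{prop2-1}'' is therefore unfounded: in that theorem the non-almost-regular case was disposed of using the \emph{specific} automorphism $t_P$ and density of its orbits, a tool unavailable here.

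The paper closes this gap by an entirely different route. Since $T$ is simple, every proper subvariety of $T$ (in particular every fiber $F_b$ and every component of $I(\varphi)$) has a resolution of general type, hence is Moishezon. The exceptional divisors $E_l$ over $I(\varphi)$ are then algebraic as well. Ruling out the almost-regular case exactly as you do, one is left with some $E_1$ dominating $B$; then any two general points of $V$ can be joined by a finite chain of algebraic curves (travel along fibers and across $E_1$), and Campana's criterion forces $V$, hence $T$, to be Moishezon. As $T$ is K\"ahler, Moishezon implies projective, contradicting the hypothesis. A minor additional point: your invocation of Lemma \ref{lem0} is illegitimate, since that lemma assumes the source is projective and $T$ is not; the paper's argument avoids this by never needing $B$ to be projective.
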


\begin{proof} Here everything is in the complex analytic category. We consider complex algebraic varieties as analytic spaces. So, we call a $\C$-valued point of a complex algebraic variety simply a point.

Assume to the contrary that there is a non-trivial fibration 
$\varphi : T \cdots\to B$. As $T$ is simple, any resolution of any subvariety of $T$, other than $T$ is of general type by \cite[Chap. IV, Sect. 10]{Ue75}. In particular, they are all algebraic, $\varphi$ is not almost regular (otherwise a resolution of a general fiber would be of Kodaira dimension $0$ by the adjunction formula) and any resolutions $S_{b, i}$ of any irreducible component of $F_{b,i}$ of any fiber $F_b$ of $\varphi$ are algebraic (as a variety of general type is algebraic by definition). 

Let $I(\varphi)$ be the indeterminacy locus of $\varphi$ and $\nu : V \to T$ be a Hironaka's resolution of $I(\varphi)$ 
and $\psi : V \to B$ be the induced morphism. Let $E_l$ be the exceptional divisors of $\nu$. Then $E_l$ is also algebraic as so is any irreducible component $I_k$ of $I(\varphi)$ is algebraic. Also, any general fibers of $\psi$ is algebraic, as they are Hironaka's resolutions of fibrs of $\varphi$ which is algebraic. As $\varphi$ is not almost regular, it follows that one of $E_l$, say $E_1$, dominates $B$. 

Recall that any two points of complex algebraic variety are connected by a finite chain of algebraic curves. Thus, any two general points of $V$ are connected by a finite chain of algebraic curves, via fibers and $E_1$. Thus $V$ is algebraic by a result of Campana \cite[Page 212, Cor]{Ca81}. Hence so is $T$, as $T$ is bimeromorphic to $V$. As $T$ is compact K\"ahler manifold, it is then projective by a famous result of Moishezon, a contradiction to our assumption that $T$ is not projective. 
This completes the proof. 
\end{proof}

\section{Proof of Theorem \ref{thm1} and Corollary \ref{cor1}.}

Let us first prove Theorem \ref{thm1}. The statement (2) is a special case of Theorem \ref{prop2-1}. Let us show the statement (1). 

By Theorem \ref{thm0} and Corollary \ref{cor2}, we may and will 
assume that $S$ is either 

(i) a smooth rational surface; 

(ii) a K3 surface; 

(iii) an Enriques surface; or

(vi) an abelian surface.

Again by Theorem \ref{thm0}, we may and will further assume that $g \in {\rm Aut}\, (S)$ and $(g^*)^n = id$ on ${\rm NS}\, (S)$ as well as on $H_{{\rm et}}^k(S, \Q_l)$. Here $l$ is a prime number such that $l \not= p$. 

Consider the cases (i), (ii), and (iii) first.

Then ${\rm Pic}\, (S) \simeq {\rm NS}\, (S)$ except for (iii) with $p=2$, and  ${\rm Pic}\, (S)/{\rm Pic}^{\tau}\, (S) \simeq {\rm NS}\, (S)$ for (iii)  with $p=2$. Here ${\rm Pic}^{\tau}\, (S)$ is a finite group scheme of length $2$ (\cite{BM77}). Thus $g^*$ as an automorphism of ${\rm Pic}\, (S)$ is also of finite order. Hence $g$ is not primitive by Lemma \ref{lem1} (2). 

Consider the case (iv). Recall the following fact. Here for an abelian variety $T$, we denote by ${\rm Aut}_{{\rm group}}\, (T)$, the group of automorphisms of $T$ as algebraic 
group, and by $T(k) = \{t_P | P \in T(k)\}$, the group of translation automorphisms of $T$. 
\begin{proposition}\label{prop2-4}
Let $T$ be an abelian variety. Then:

(1) ${\rm Aut}_{{\rm variety}}\, (T)$ is the semi-direct product of 
${\rm Aut}_{{\rm group}}\, (T)$ and $T(k) = \{t_P | P \in T(k)\}$. In the semi-direct product, $T(k)$ is a normal subgroup. 

(2) The representation of ${\rm Aut}_{\rm group}\, (T)$ on $H_{{\rm et}}^1(T, \Q_l)$ is faithful and the representation of $T(k)$ on $H_{{\rm et}}^1(T, \Q_l)$ is trivial.  
\end{proposition}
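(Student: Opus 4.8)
The plan is to prove (1) by the rigidity of abelian varieties and (2) by passing to the $l$-adic Tate module.

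For (1), I would start from an arbitrary $\phi \in {\rm Aut}_{\rm variety}(T)$, set $Q := \phi(0) \in T(k)$, and consider the composite $t_{-Q} \circ \phi$, which is a variety automorphism fixing the origin. The essential input is the classical rigidity statement (see \cite{Mu70}) that every morphism of abelian varieties sending $0$ to $0$ is automatically a homomorphism of group varieties; applied to $t_{-Q}\circ\phi$ and to its inverse, this shows $t_{-Q}\circ\phi \in {\rm Aut}_{\rm group}(T)$, so that $\phi = t_Q\circ(t_{-Q}\circ\phi)$ realizes the desired factorization. The two subgroups meet only in the identity because a translation fixing $0$ is trivial, and $T(k)$ is normal with the prescribed semidirect action because one checks directly that $h\circ t_P\circ h^{-1} = t_{h(P)}$ for every $h \in {\rm Aut}_{\rm group}(T)$; this conjugation formula is the only computation needed here.

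For the translation part of (2), I would note that the morphism $T \to {\rm Aut}(T)$, $P \mapsto t_P$, has connected source, whence $T(k) \subset {\rm Aut}^0(T)$, and a connected group acts trivially on $H^*_{\rm et}(-,\Q_l)$. I prefer to make this concrete via Künneth: for the addition map $m : T\times T \to T$ one has the primitivity formula $m^* = p_1^* + p_2^*$ on $H^1_{\rm et}(T,\Q_l)$. Restricting along the inclusion $i_P : T \to T\times T$, $x \mapsto (P,x)$, and using $m\circ i_P = t_P$, $p_1\circ i_P = {\rm const}_P$, $p_2\circ i_P = {\rm id}$, the first summand dies (a positive-degree class pulled back through a constant map vanishes) while the second restores the class, giving $t_P^* = {\rm id}$ on $H^1_{\rm et}(T,\Q_l)$. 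For the faithfulness of ${\rm Aut}_{\rm group}(T)$, I would identify $H^1_{\rm et}(T,\Q_l)$ with the $\Q_l$-linear dual of the rational Tate module $V_l(T) := \bigl(\varprojlim_n T[l^n]\bigr)\otimes_{\Z_l}\Q_l$, so that the representation on $H^1_{\rm et}$ is contragredient to the one on $V_l(T)$ and it suffices to prove the latter is faithful. Taking $l \neq p$, the torsion $T[l^\infty] := \bigcup_n T[l^n]$ is Zariski dense in $T$; if $h \in {\rm Aut}_{\rm group}(T)$ acts trivially on $V_l(T)$, then $h - {\rm id}_T \in \End(T)$ kills every $l$-power torsion point, hence vanishes on a Zariski dense subset and is therefore $0$, so $h = {\rm id}_T$.

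The step I expect to be the main obstacle is the triviality of translations on cohomology, because in characteristic $p>0$ the naive homotopy argument available over $\C$ is not at hand; the honest replacement is either the connectedness principle (the action map and the second projection induce the same map on $H^*_{\rm et}$, as they agree over the identity of the connected group $T$) or the Künneth computation above, both ultimately resting on the primitivity identity $m^* = p_1^* + p_2^*$ on $H^1_{\rm et}$. The remaining ingredients — rigidity for (1), and the density of torsion together with the injectivity of the Tate representation for (2) — are standard facts about abelian varieties, so I would cite rather than reprove them.
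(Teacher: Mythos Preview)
Your proposal is correct and aligns with the paper's approach: the paper simply cites \cite[p.~43, Cor.~1]{Mu70} for (1) and \cite[p.~176, Thm.~3]{Mu70} for the faithfulness in (2), and for the triviality of translations it gives exactly your first argument, namely $T(k) \subset {\rm Aut}^0(T)$ since $T$ is irreducible. You have unpacked the content of those citations (rigidity for (1), the Tate module for faithfulness) and added the K\"unneth/primitivity computation as a concrete alternative, but the underlying route is the same.
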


\begin{proof} The assertion (1) is proved in \cite[p. 43, Cor. 1]{Mu70} and the first assertion of (2) is proved, in a slightly different terms, in \cite[p. 176. Thm. 3]{Mu70}. As $T$ is irreducible, $T(k) \subset {\rm Aut}^0(T)$. This implies the second assertion of (2). 
\end{proof}

Let us return back to the case (iV). By Proposition \ref{prop2-4} (1), one can write 
$$g = t_P \circ h\,\, $$
where $h \in {\rm Aut}_{{\rm group}}\, (S)$ and $P \in S(k)$.

We show that $h = id_S$. Consider the endomorphism
$$f := id_S - h : S \to S\,\, .$$
If $f$ would not be surjective but not $0$, then the identity component $E$ of ${\rm Ker}\, f$ would be a one dimensional closed algebraic subgroup of $S$, i.e., an elliptic curve. As $h(E) = E$, the quotient map $\pi : S \to S/E$ would be preserved by $g = t_P \circ h$, a contradiction to the fact that $g$ 
is primitive. 

If $f$ would be surjective, then there would be $Q \in S(k)$ such that $P = id_S(Q) - h(Q)$. Then $g(Q) = Q$ and $g^n(Q) = Q$. 

On the other hand, as $g^n \in {\rm Aut}^0(S)$, the automorphism $g^n$ acts on $H_{{\rm et}}^1(S, \Q_l)$ as identity. Thus $g^n$ is a translation automorphism of $S$ by Proposition \ref{prop2-4} (2). Then $g^n = id$ by $g^n(Q) = Q$, and therefore $g$ is of finite order, a contradiction to the fact that $g$ is of infinite order if $g$ is primitive (Lemma \ref{lem1} (3)). 

Hence $f = 0$, that is, $h = id_S$. Thus $g = t_P$. Then, by Proposition \ref{lem2-2}, $S$ is simple. Also, by Lemma \ref{lem1}(3), $P$ is not a torsion point of $S$. 

This completes the proof of Theorem \ref{thm1}.

Now we prove Corollary \ref{cor1}. 

Let us first show Corollary \ref{cor1} (1). As mentioned in Introduction, there is a simple abelian surface $T$ defined over any algebraically closed field $k$ (\cite{Mo77}, \cite{HZ02}). We apply this for $k = \overline{\Q}$ (resp. $k = \overline{\F_q(C)}$). Then $T(\overline{\Q})$ (resp. $\overline{\F_q(C)}$) has a non-torsion point $P$ by Proposition \ref{qu2-2}. Then $(T, t_P)$ saitisfies the requirement of Theorem \ref{thm1} (2). This implies Corollary \ref{cor1} (1). 

Let us next show Corollary \ref{cor1} (2). If there would exist such a pair $(S, g)$, then $(S, g)$ would be birationally conjugate to $(A, t_P)$ in Theorem \ref{thm1} (1). However, by Proposition \ref{qu2-3}, $t_P$ is of finite order, a contradiction to the fact that $t_P$ is of infinite order in Theorem \ref{thm1} (1). This implies Corollary \ref{cor1} (2). 

This completes the proof of Corollary \ref{cor1}.

\end{document}